\newtheorem{theorem}{Theorem}
\newtheorem{lemma}{Lemma}[section]
\newtheorem{remark}{Remark}
\newtheorem{proposition}{Proposition}
\begin{document}

\author{Aritra Ghosh and Kummari Mallesham}
\title{Sub-Weyl strength bounds for twisted $GL(2)$ short character sums}

\address{ Aritra Ghosh and Kummari Mallesham \newline {\em Stat-Math Unit, Indian Statistical Institute, 203 B.T. Road, Kolkata 700108, India; \newline  Email:  aritrajp30@gmail.com
} }

\address{Kummari Mallesham \newline {\em Department of Mathematics, IIT Bombay, Powai, India, 400076; \newline  Email:  iitm.mallesham@gmail.com
} }

\maketitle

\begin{abstract}
 Let $$S(N) = \sum_{n \sim N}^{\text{smooth}} \, \lambda_{f}(n) \, \chi(n),$$
 where $\lambda_{f}(n)$'s are Fourier coefficients of Hecke-eigen form, and $\chi$ is a primitive character of conductor $p^{r}$. In this article we prove a sub-Weyl strength bounds for $S(N)$. Indeed, we obtain
 $$S(N) \ll \, N^{\frac{5}{9}} \ p^{\frac{13r}{45}},$$
 provided that $ p^{13r/20} \leq N \leq p^{4r/5}$.  Note that the above bound for $S(N)$ is non-trivial if $N\geq \left(p^{r}\right)^{\frac{2}{3}-\frac{1}{60}}$. 
\end{abstract}

\tableofcontents

\section{ Introduction } 

Let $f$ be a holomorphic Hecke-eigen cusp form on $SL(2,\mathbb{Z})$ with normalized Fourier coefficients $\lambda_{f}(n)$. Let $\chi$ be a Dirichlet character of conductor $p^{r}$. In this article we consider the character sum
$$S_{f,\chi}(N) = \sum_{n=1}^{\infty} \, \lambda_{f}(n) \, \chi(n) \, W\left(\frac{n}{N}\right),$$
where $W$ is a bump function supported on the interval $[1,2]$ and satisfies $W^{(j)}(x) \ll_{j} 1$.  One can use information (bounds for $L(1/2, f \times \chi)$) about $L$-values $L(1/2, f\times \chi)$ to show cancellations in the sum $S_{f,\chi}(N)$. Indeed, by Mellin inversion we have that

$$S_{f,\chi}(N) = \frac{1}{2 \pi i} \int_{(\sigma)} N^{s} \tilde{W}(s) \, L\left(s, f\times \chi\right) \, ds, \quad \sigma >1.$$
First shift the contour to $1/2$-line and estimate trivially to get
$$S_{f,\chi}(N) \ll N^{1/2} \, \vert L(1/2, f \times \chi ) \vert  N^{\epsilon}, $$
where we have used the fact that $\tilde{W}(s)$ decays rapidly as $\Im(s) \to \infty$. We fix the form $f$. Then the analytic conductor of the $L$ value $$L(1/2, f \times \chi)$$ becomes $p^{2r}$. The convexity bound $L(1/2, f \times \chi) \ll p^{r/2}$ would imply that
$$S_{f,\chi} (N) \ll N^{1/2} \, p^{r/2} N^{\epsilon},$$
which is non-trivial if $N> p^{r}$. The best known bound for $L(1/2, f \times \chi)$ is the Weyl bound $L(1/2, f \times \chi) \ll p^{r/3}$, due to D. Mili\'cevi\'c and V. Blomer \cite{miliblom}, and R. Munshi and S. Singh \cite{munshisingh}, which would then imply that
$$S_{f,\chi} (N) \ll N^{1/2} \, p^{r/3} N^{\epsilon},$$
which is non-trivial if $N> p^{2r/3}$. Currently we do not know how to obtain sub-Weyl bounds for $L(1/2, f \times \chi)$. But the sub-Weyl type bounds $L(1/2, f \times \chi) \ll (p^{r})^{\frac{1}{3}-\eta}$ for some $\eta >0$ would give non-trivial bounds for $S_{f},\chi(N)$ whenever $N > (p^{r})^{\frac{2}{3}-2 \eta}$. It is needless to mention that Lyndel\"of hypothesis would give non-trivial bounds for $L(1/2, f \times \chi)$ if $N > p^{r\epsilon}$.

Let $K(n)$ be a trace function modulo prime $p$. In \cite{michle}, E. Fouvry, E. Kowalski and P. Michel showed that
$$\sum_{n} \lambda_{f}(n) \, K(n) \, W\left(\frac{n}{N}\right) \ll N^{1/2} p^{3/8} N^{\epsilon}.$$
This is Burgess type bound which gives non-trivial bounds for above sums  if $N> p^{3/4}$. Recently, In \cite{ghosh} the first author showed that we have cancellation  in the above sums  if $N> p^{2/3}$ (Weyl strength) in the case when the trace function  $K(n)$ is taken to be a Dirichlet character. But for general trace functions $K(n)$ the bound of Fouvry, Kowalski, and Michel is the best. 

We are interested in showing that sub-Weyl strength cancellations in $S_{f,\chi}(N)$ when $\chi$ is Dirichlet character of conductor $p^{r}$ and $r \to \infty$ (depth aspect). This will be a counterpart result to the that of R. Holowinsky, R. Munshi, and Z. Qi \cite{RRZ} where they showed sub Weyl strength cancellations in anlytic twist of $\lambda_{f}(n)$.

Our aim in this article is to establish the following theorem.

\begin{theorem} \label{mainth} Let $p$ be odd prime such that $p > 5$. Then we have 
$$S_{f, \chi}(N) \ll  N^{\frac{5}{9}} \ p^{\frac{13r}{45}} \, N^{\epsilon},$$
where implied constant depends on the prime $p$, and provided $p^{13r/20} \leq N\leq p^{4r/5}$.
\end{theorem}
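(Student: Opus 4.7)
The strategy would follow the conductor-lowering $\delta$-symbol method developed by Munshi, in the sub-Weyl variant pioneered by Holowinsky, Munshi and Qi for analytic twists, adapted here to the Dirichlet character setting. The central handle in the depth aspect is the $p$-adic rigidity of $\chi$: for a parameter $s$ with $0<s<r$, one has a local expansion of the shape
$$\chi(n_{0}+mp^{s})=\chi(n_{0})\,e_{p^{r-s}}\!\bigl(a_{\chi}(n_{0})\,m\bigr)$$
valid whenever $(n_{0},p)=1$ and $|m|<p^{r-s}$. This effectively reduces the conductor of $\chi$ from $p^{r}$ to $p^{r-s}$ along arithmetic progressions modulo $p^{s}$, and it is the extra length $p^{s}$ gained here that will be converted into the sub-Weyl saving.

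First I would average $S_{f,\chi}(N)$ over such short progressions, so that $\chi$ appears as an additive phase of reduced modulus $p^{r-s}$. Next I would apply the DFI $\delta$-symbol with moduli $q\le Q$ to detach $\lambda_{f}$ from $\chi$, apply the $\mathrm{GL}(2)$ Voronoi summation to the $\lambda_{f}$-variable and Poisson summation to the $\chi$-variable; with $Q$ chosen to balance the two dual lengths, $\sim N/Q^{2}$ and $\sim Np^{r-s}/Q^{2}$, the transformed sums become comparable in size. A Cauchy--Schwarz inequality keeping the Voronoi-dual variable outside and opening the square over the Poisson-dual variable would then replace the Hecke eigenvalues by $|\lambda_{f}|^{2}$, whose mean is controlled by Rankin--Selberg. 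A final Poisson summation in the inner sum collapses the diagonal condition to a complete exponential sum modulo $qp^{r}$.

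The evaluation of this final complete sum is the step I expect to be the main obstacle. Its $p$-part requires a careful $p$-adic stationary-phase analysis of the kind carried out by Mili\'cevi\'c for characters of prime-power modulus, with square-root cancellation needed uniformly in the off-diagonal shifts and with a precise accounting of the degenerate stationary locus, since it is the strength of this bound that ultimately drives the exponent $13/45$ rather than the Weyl $1/3$. Once this is in hand, a three-way optimization in $N$, $Q$ and $p^{s}$ yields the bound $N^{5/9}p^{13r/45}N^{\epsilon}$, and the admissible window $p^{13r/20}\le N\le p^{4r/5}$ emerges as precisely the range in which this optimization improves upon both the trivial estimate and the Weyl-strength input from $L(1/2,f\times\chi)$.
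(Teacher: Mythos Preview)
Your broad architecture---conductor-lowered $\delta$-symbol, Voronoi on the $\lambda_f$-side, Poisson on the $\chi$-side, Cauchy--Schwarz, then a second Poisson---matches the paper. Your conductor-lowering device (pre-averaging over progressions $n_0+mp^s$ via the local expansion of $\chi$) is a variant of what the paper actually does, namely inserting the congruence $p^{\ell}\mid(n-m)$ before expanding $\delta\bigl((n-m)/p^{\ell}\bigr)$ with $Q=\sqrt{N/p^{\ell}}$; the two are morally the same and either would work.

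The genuine gap is in your endgame. You expect the second Poisson to produce a \emph{complete} exponential sum modulo $qp^{r}$, to be dispatched by $p$-adic stationary phase with square-root cancellation. In the paper that step does occur---there is a complete sum over $\alpha\bmod p^{\ell-\ell_1}$ which is evaluated exactly by $p$-adic stationary phase, saving $p^{(\ell-\ell_1)/2}$---but it is \emph{not} the step that produces the sub-Weyl exponent. After that evaluation one of the Poisson-dual variables (the paper's $r_2$) is still free, running over an interval of length $\asymp p^{r}/N$ and carrying a rational-function phase to modulus $p^{r}$. This is an \emph{incomplete} $p$-adic exponential sum, far shorter than its modulus, and stationary phase gives nothing here. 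The paper's decisive input, which you have not anticipated, is Mili\'cevi\'c's $p$-adic van der Corput theory: one checks that the phase lies in the class ${\bf F}(\kappa,1,\kappa,\lambda,u)$ and applies the $p$-adic exponent pair $(1/30,13/15)$ (valid for $p>5$, whence the hypothesis) to obtain
\[
\sum_{r_2\sim R} e\!\left(\frac{g(r_2)}{p^{\ell-\ell_1}}\right)\ \ll_p\ \Bigl(\frac{p^{r}}{R}\Bigr)^{1/30} R^{13/15}.
\]
It is precisely this exponent pair, together with the optimal choice $\ell=\tfrac{26r}{45}+\tfrac{1}{9}\log_p N$, that yields $13r/45$; the admissible window $p^{13r/20}\le N\le p^{4r/5}$ comes from the constraints $p^{\ell}\le N$ and $\ell\le 2r/3$ needed along the way. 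If you rely only on square-root cancellation in the complete sum you will not beat Weyl; the missing idea is the $p$-adic exponent pair for the residual short sum.
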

\begin{remark}
We need condition $p > 5$ on the prime $p$ to apply $p$-adic exponent pair $(1/30,13/15)$.
\end{remark}
\begin{remark}
we can get same kind of bounds for $S_{f},\chi(N)$ even if we consider Fourier coefficients $\lambda_{f}(n)$ of Hecke Maass cusp form $f$ as we only require Ramanujan bound for the coefficients in $L^{2}$ sense.  
\end{remark}

\subsection{Method of the proof}
We take the path of circle method to bound the sum $S_{f,\chi}(n)$, especially the approach of R. Munshi. First we separate the oscillations $\lambda_{f}(n)$ and $\chi(n)$ using the delta symbol. While separating these oscillations we introduce extra additive harmonic in the sum which serve as conductor lowering in the delta method. Once these get separated we apply Voronoi and Poisson summation formulas accordingly. We then remove the Fourier coefficients by applying Cauchy-Schwartz inequality. In the resulting expression we open the absolute square and again we employ Poisson summation. In zero frequency we can not do much other than evaluating trivially. But in non-zero frequency we end up with sums of the form 
$$\sum_{ R \leq m \leq 2R} e\left(\frac{f(m)}{p^{r}}\right)$$
with $R \leq p^{r}/N$ and ``nice" phase function $f$. In this sum we seek to get some cancellations which we can achieve by urging $p$-adic exponent pair $(1/15, 13/15)$. The main novelty of this paper is to apply $p$-adic exponent pair to get better bounds. These $p$-adic analogue exponent pair is developed by D. Mili\'cevi\'c in \cite{mil}.

\subsection{Notations}
We write $p^{s} \parallel m$ to denote that $p^{s}\mid m$ and $p^{s+1} \nmid m$.

\section{An application of the circle method} \label{circlemethod}

We separate the oscillations $\lambda_{f}(n)$ and $\chi(n)$ in the sum $S_{f,\chi}(N)$ by  using the delta symbol $\delta$ which is defined on the set of integers by $\delta(0)=1$ and $\delta(m) =0$ if $m\neq 0$.  We have the following expression for $\delta$ which is due to Duke, Friedlander and Iwaniec  \cite{DFI}. Let $L\geq 1$ be a large number. For $n \in [-2L,2L]$, we have

\begin{align*} 
\delta(n)= \frac{1}{Q} \sum_{1 \leq q \leq Q} \frac{1}{q} \, \sideset{}{^\star}{\sum}_{a \, \rm mod \, q} \, e \left(\frac{na}{q}\right) \int_{\mathbb{R}} g(q,x) \,  e\left(\frac{nx}{qQ}\right) \, \mathrm{d}x,
\end{align*}
where  $Q=2L^{1/2}$, and the function  $g(q,x)$ satisfies the following properties (see, \cite[Lemma 5,]{Haung}): 
\begin{itemize}
	\item $g(q,x) = 1 +O\left( \frac{Q}{q}\left(\frac{q}{Q}+|x|\right)^{A}\right), \quad g(q,x) \ll |x|^{-A}$ for any $A>1$.
	\item $x^{j} \frac{\partial^{j}}{\partial x^{j}} g(q,x) \ll \log Q \,  \min \{ \frac{Q}{q},\frac{1}{|x|}\}$.
	\item $\int_{\mathbb{R}} \left(|g(q,x)|+|g(q,x)|^{2}\right) \rm dx \ll Q^{\epsilon}$.
\end{itemize}

Indeed,  we have

$$S_{f,\chi}(N) = \mathop{\sum \sum}_{ \substack {{m, n=1} \\  p^{\ell} \mid (n-m) }}^\infty \lambda_f (n)  \chi(m) \, \delta \left( \frac{n -m}{p^{\ell}} \right) W\left( \frac{n}{N} \right) V\left( \frac{m}{N} \right)$$
with the condition that \underline{$p^{\ell} \leq N$ and $\ell \leq r$}. Now by writing the expression for $\delta$, with the choice $Q =\sqrt{N/p^{\ell}}$, in the above sum we arrive at

\begin{align} 
S_{f,\chi}(N)=   \frac{1}{Qp^{\ell}} \int_{\mathbb{R}}  \sum_{1 \leq q\leq Q} \frac{g(q,x)}{q} 
\mathop{ \sideset{}{^\star}\sum}_{a \, \mathrm{mod} \, q}    \, \sum_{b \,  \mathrm{mod} \, p^{\ell}}  \, \mathcal{S}_{f}(N;a,b,q,x) \,\mathcal{S}_{\chi}(N;a,b,q,x) \,  dx,  
\end{align} 
where
\begin{equation} \label{nsum}
\mathcal{S}_{f}(N;a,b,q,x) = \sum_{n=1}^\infty  \lambda_f(n)   e\left(  \frac{ (a+ bq) n}{p^{\ell} q} \right) e\left(  \frac{ x n}{  p^{\ell}  qQ} \right)   W\left( \frac{n}{N} \right) ,
\end{equation}
and 
\begin{equation} \label{msum}
\mathcal{S}_{\chi}(N;a,b,q,x) = \sum_{m=1}^\infty   \chi(m)  e\left( - \frac{ (a+ bq) m}{p^{\ell} q} \right) e\left(  \frac{ -m x }{  p^{\ell} qQ} \right)  V\left( \frac{m}{N} \right).
\end{equation}

\section{Application of summation formulas}

\subsection{Applying Poisson summation Formula}
We shall apply the Poisson summation formula to the sum over $m$ in equation  \eqref{msum} to get the following lemma.

\begin{lemma} \label{poissonsum}
We have
$$\mathcal{S}_{\chi}(N;a,b,q,x)= \frac{N}{p^{r} q} \, \sum_{\substack{m \in \mathbb{Z} \\ |m| \leq M_{0}}}\, \mathcal{C} (a, b , q, m) \, \mathcal{I} (x , q, m) + O\left(N^{-2022}\right),$$
with $M_{0} := \frac{p^{r}Q}{N} N^{\epsilon}$,
where
$$\mathcal{C} (a, b , q, m) = \sum_{\beta(p^{r} q)} \chi(\beta) e\left( - \frac{ (a+ bq) \beta}{p^{\ell} q}  + \frac{m \beta}{ p^r q}\right) ,$$
and 
$$ \mathcal{I} (x , q, m) = \int_{\mathbb{R}} V (z)  e\left(  \frac{ - N x z}{  p^{\ell}  qQ} \right) e\left(  \frac{ - N m z }{  p^{r} q} \right) dz.$$
\end{lemma}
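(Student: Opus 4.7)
The plan is to apply Poisson summation to the $m$-variable after separating it modulo $p^r q$---the natural common modulus for both the character $\chi$ (of conductor $p^r$) and the additive character $e(-(a+bq)m/(p^\ell q))$, since $\ell \leq r$ implies $p^\ell q \mid p^r q$ (regardless of whether $p$ divides $q$).

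First I would write $m = \beta + p^r q\, k$ with $\beta$ running over a set of representatives modulo $p^r q$ and $k \in \mathbb{Z}$. Because $\chi(m) = \chi(\beta)$ and $e(-(a+bq)m/(p^\ell q)) = e(-(a+bq)\beta/(p^\ell q))$, both arithmetic factors pull outside the $k$-sum, yielding
\begin{align*}
\mathcal{S}_{\chi}(N;a,b,q,x) = \sum_{\beta \bmod p^r q} \chi(\beta)\, e\!\left(-\frac{(a+bq)\beta}{p^\ell q}\right) \sum_{k \in \mathbb{Z}} F(\beta + p^r q\, k),
\end{align*}
where $F(y) = V(y/N)\, e(-yx/(p^\ell q Q))$ is smooth of compact support.

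Next I would apply Poisson summation to the inner $k$-sum, converting it into $(p^r q)^{-1} \sum_{m \in \mathbb{Z}} \widehat{F}(m/(p^r q))\, e(m\beta/(p^r q))$. The change of variables $y = Nz$ in the Fourier transform identifies $\widehat{F}(m/(p^r q)) = N \cdot \mathcal{I}(x,q,m)$, and the remaining $\beta$-sum is exactly the complete character sum $\mathcal{C}(a,b,q,m)$. Assembling the pieces produces the stated main term $\frac{N}{p^r q}\sum_m \mathcal{C}(a,b,q,m)\,\mathcal{I}(x,q,m)$.

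The truncation $|m| \leq M_0$ comes from integration by parts in $\mathcal{I}(x,q,m)$. The phase $\phi(z) = -Nz\bigl(x/(p^\ell q Q) + m/(p^r q)\bigr)$ is linear, so each integration by parts saves a factor $|\phi'|^{-1}$. For $|m| > M_0 = p^r Q N^\epsilon /N$, the contribution $|Nm/(p^r q)|$ exceeds $Q N^\epsilon /q$, and iterating produces the $O(N^{-2022})$ error. The main technical obstacle is the region in which $Nx/(p^\ell q Q)$ nearly cancels $Nm/(p^r q)$ in $\phi'$; there, however, near-cancellation forces $|x| \gg N^\epsilon$ (using $Q^2 = N/p^\ell$), at which point the rapid decay $g(q,x) \ll |x|^{-A}$ in the outer $x$-integral of Section~\ref{circlemethod} absorbs these contributions. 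This bookkeeping step, combining the phase analysis with the support properties of $g(q,x)$, is the only non-routine part of the argument.
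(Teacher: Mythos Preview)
Your proof is correct and follows the same approach as the paper: split modulo $p^{r}q$, apply Poisson, and change variables to isolate $\mathcal{C}$ and $\mathcal{I}$. The only organizational difference is in the truncation step---the paper obtains $\mathcal{I}(x,q,m)\ll_{j}\bigl(1+\tfrac{N|x|}{p^{\ell}qQ}\bigr)^{j}\bigl(\tfrac{p^{r}q}{N|m|}\bigr)^{j}$ by treating $V(z)\,e(-Nxz/(p^{\ell}qQ))$ as the weight in the integration by parts (which yields the cutoff $|m|\leq M_{0}$ directly once $|x|\ll N^{\epsilon}$), rather than appealing to the decay of $g(q,x)$ as you do for the near-cancellation region.
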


\begin{proof}
We split the $m$-sum in \eqref{msum} into congruence classes modulo $p^{r}q$. Indeed, we write $m=\beta + c p^{r} q$ with $\beta \, \rm \, mod \, \, p^{r}q$, and $c \in \mathbb{Z}$ to get

\begin{align*}
\mathcal{S}_{\chi}(N;a,b,q,x) &  = \sum_{\beta(p^{r} q)} \chi(\beta) e\left( - \frac{ (a+ bq) \beta}{p^{\ell} q} \right) \sum_{ c \in \mathbb{Z}}  V\left( \frac{ \beta +  c p^{r} q }{N} \right)   e\left(  \frac{ -(\beta +  c p^{r} q) x }{  p^{\ell} qQ} \right) \\
&  = \sum_{\beta(p^{r} q)} \chi(\beta) e\left( - \frac{ (a+ bq) \beta}{p^{\ell} q} \right) \sum_{ m \in \mathbb{Z}} \int_{\mathbb{R}} V\left( \frac{ \beta + y p^{r} q }{N} \right)   e\left(  \frac{ -(\beta + y p^{r} q) x }{  p^{\ell} qQ} \right) e(-my) dy, 
\end{align*} 
the second equality follows by applying Poisson summation formula. We now substitute the change of variable $ ( \beta + y p^{r} q)/N = z $ to obtain the value of $\mathcal{S}_{\chi}(N;a,b,q,x)$ to be
\begin{align*}
& \frac{N}{p^{r} q}\sum_{ m \in \mathbb{Z}} \left\lbrace \sum_{\beta(p^{r} q)} \chi(\beta) e\left( - \frac{ (a+ bq) \beta}{p^{\ell} q}  + \frac{m \beta}{ p^r q}\right) \right\rbrace \int_{\mathbb{R}} V (z)  e\left(  \frac{ - N x z}{  p^{\ell}  qQ} \right) e\left(  \frac{ - N m z }{  p^{r} q} \right) dz   \\
&=  \frac{N}{p^{r} q} \, \sum_{m \in \mathbb{Z}}\, \mathcal{C} (a, b , q,m) \, \mathcal{I} (x , q, m),
\end{align*}
where $\mathcal{C} (a, b , q,m),  \, \mathcal{I} (x , q, m)$ are given as above. We see, by repeated integration by parts, that
 $$\mathcal{I} (x , q, m) \ll_{j} \left(1+\frac{N|x|}{p^{\ell}qQ}\right)^{j} \left(\frac{p^{r}q}{Nm}\right)^{j},$$
 for any $j \geq 0$. Thus, $\mathcal{I} (x , q, m)$ is negligibly small unless 
 $$|m| \leq M_{0} := \frac{p^{r}Q}{N} N^{\epsilon}.$$
\end{proof}

We now first evaluate the character sum in the following subsection.

\subsection{Evaluation of the character sum}
We have the following lemma.

\begin{lemma} \label{charsumlema}
Let $q=p^{r_{1}}q^{\prime}$ with $(p,q^{\prime})=1$ (i.e., $p^{r_{1}} \parallel q$). Then we have
\begin{align*}
\mathcal{C} (a, b , q,m) & = \begin{cases}
q \ \chi (q^{\prime}) \ \overline{\chi} \left( \frac{m- (a + bq ) p^{r- \ell}}{p^{r_1}}\right)  \tau_{\chi}    \ \  \ \ \   \textrm{if }  \ \ \ \   a  \equiv  m  \, \overline{p^{r-\ell}}  \mod  q^{\prime}, \quad \mathrm{and} \quad p^{r_{1}} \parallel m\\ 
0   \ \ \ \     \textrm{otherwise} ,
\end{cases}
\end{align*}
where $ \tau_{\chi} $ denotes the Gauss sum. 
\end{lemma}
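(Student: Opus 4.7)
My strategy is to collapse the two exponentials in $\mathcal{C}(a,b,q,m)$ onto a common modulus and then apply the Chinese Remainder Theorem to factor the sum into its $p$-part and its coprime-to-$p$ part. Writing $q = p^{r_{1}} q'$ with $(q',p)=1$ and using $\ell \le r$, the two fractions $-(a+bq)/(p^{\ell}q)$ and $m/(p^{r}q)$ share common denominator $p^{r}q = p^{r+r_{1}} q'$, and the combined numerator is $M\beta$ with $M := m - (a+bq)p^{r-\ell}$. Splitting $\beta$ via CRT as $\beta \equiv u \pmod{p^{r+r_{1}}}$, $\beta \equiv v \pmod{q'}$, one has $\chi(\beta)=\chi(u)$ (since $\chi$ has conductor $p^{r}$), and the exponential factors as $e(M\,\overline{q'}\,u/p^{r+r_{1}})\,e(M\,\overline{p^{r+r_{1}}}\,v/q')$.

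The $v$-sum is a complete additive character sum modulo $q'$, evaluating to $q'$ when $q' \mid M$ and vanishing otherwise. Using $q \equiv 0 \pmod{q'}$, the divisibility $q' \mid M$ is equivalent to $a \equiv m\,\overline{p^{r-\ell}} \pmod{q'}$, which is exactly the first congruence in the statement.

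For the $u$-sum I would further decompose $u = u_{0} + p^{r} t$ with $u_{0}$ modulo $p^{r}$ and $t$ modulo $p^{r_{1}}$, again using $\chi(u) = \chi(u_{0})$. The $t$-sum decouples as a complete exponential modulo $p^{r_{1}}$, vanishing unless $p^{r_{1}} \mid M$ and contributing $p^{r_{1}}$ when this holds. Writing $M = p^{r_{1}} M'$, the inner sum reduces to
$$\sum_{u_{0}\,(p^{r})} \chi(u_{0})\, e\!\left(\frac{M'\,\overline{q'}\, u_{0}}{p^{r}}\right),$$
which is a twisted Gauss sum for the primitive character $\chi$ of conductor $p^{r}$ and evaluates to $\overline{\chi}(M')\,\chi(q')\,\tau_{\chi}$ when $(M',p)=1$ and to $0$ otherwise. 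The constraint $(M',p)=1$ is precisely $p^{r_{1}} \parallel M$; under the paper's parameter regime $p^{\ell}\le N\le p^{4r/5}$ a short calculation gives $r-\ell \ge r_{1}$, whence $p^{r_{1}}\mid (a+bq)p^{r-\ell}$ and the strict divisibility $p^{r_{1}} \parallel M$ transfers to the stated $p^{r_{1}} \parallel m$. Multiplying the $v$-contribution $q'$ with the $u$-contribution $p^{r_{1}} \chi(q')\,\overline{\chi}(M')\,\tau_{\chi}$ yields $q\,\chi(q')\,\overline{\chi}(M/p^{r_{1}})\,\tau_{\chi}$, as asserted.

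The only genuine obstacle is tracking the various inverses and congruences through the CRT decomposition and verifying that the two vanishing conditions coming from the $v$-sum and from the Gauss-sum step combine cleanly into the form given in the lemma; each individual character-sum evaluation is standard, so no structural surprises are expected.
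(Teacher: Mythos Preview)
Your proof is correct and essentially identical to the paper's: both bring the exponential to common denominator $p^{r+r_{1}}q'$, use CRT to split off the $q'$-sum (yielding the congruence on $a$), decompose the $p$-part as $u_{0}+p^{r}t$ so that the $t$-sum forces $p^{r_{1}}\mid M$, and reduce the remaining $u_{0}$-sum to the Gauss sum for the primitive character $\chi$. One small sharpening: the paper records the \emph{strict} inequality $r_{1}<r-\ell$ (from $p^{r_{1}}\le Q\le p^{(r-\ell)/2}$), and it is this strict form---not merely $r-\ell\ge r_{1}$---that is needed for the transfer $p^{r_{1}}\parallel M \Leftrightarrow p^{r_{1}}\parallel m$.
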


\begin{proof}
Since $q= p^{r_1} q^\prime$ with $(p, q^\prime)=1$, the character sum $\mathcal{C} (a, b , q,m)$ is given by 
\begin{align*}
 \sum_{\beta (p^{r+ r_1} q^\prime)} \chi(\beta) e  \left( \frac{  - (a+ bq ) \beta }{ p^{\ell+r_1} q^\prime }  + \frac{ m \beta }{ p^{r+r_1} q^\prime  }\right).    
\end{align*}

By writing $\beta= \alpha_1  q^\prime \overline{q^\prime} + \alpha_2  p^{r+r_1} \overline{ p^{r+r_1}}$ with $\alpha_{1}  \rm \mod \,\, p^{r+r_{1}}$ and $\alpha_{2} \, \rm \mod \,\, q^{\prime}$ in the above sum we see that the above character sum changes to
\begin{align*}
\sum_{\alpha_1 (p^{r+ r_1})}  \chi(\alpha_1)  e\left( \frac{  - (a + bq ) \alpha_1 \overline{q^\prime}}{ p^{\ell+r_1}  }  + \frac{ m \alpha_1 \overline{q^\prime} }{ p^{r+r_1}   }\right) \sum_{\alpha_2( q^\prime)} e  \left( \frac{  - (a + bq ) \alpha_2  \overline{p^{r+r_1}} p^{r- \ell}}{  q^\prime }  + \frac{ m \alpha_2  \overline{p^{r+r_1}}}{  q^\prime  }\right). 
\end{align*} 
Again, by writing $\alpha_1 = \beta_1 p^r + \beta_2$, where $\beta_2$ is modulo $ p^r$ and $ \beta_1$  modulo $ p^{r_1}$, the above sum becomes
\begin{align*}
 & \sum_{\beta_2(p^r)} \chi(\beta_2)  e\left( \frac{  - (a + bq ) p^{r-\ell}\beta_2 \overline{q^\prime }}{ p^{r+ r_1}  }  + \frac{ m \beta_2 \overline{q^\prime } }{ p^{r+ r_1}   }\right)  \sum_{\beta_1(p^{r_1})} e\left( \frac{  - (a + bq ) \beta_1 \overline{q^\prime} p^{r - \ell}}{ p^{r_1}  }  + \frac{ m \beta_1 \overline{q^\prime} }{ p^{r_1}   }\right) \\
& \hspace{2cm}  \times \sum_{\alpha_2( q^\prime)} e  \left( \frac{  - (a + bq ) \alpha_2  \overline{p^{r+r_1}} p^{r- \ell}}{  q^\prime }  + \frac{ m \alpha_2  \overline{p^{r+r_1}}}{  q^\prime  }\right).
\end{align*}
We execute sums over $\beta_1$ and $\alpha_{2}$ to transfer the above sum to 
$$q \, \mathbb{I}_{( m-  a p^{r- \ell} \equiv 0  (\textrm{mod}\ p^{r_1}  ))} \, \mathbb{I}_{( m-  a p^{r- \ell} \equiv 0  (\textrm{mod}\ q^{\prime}  ))} \sum_{\beta_2(p^r)} \chi(\beta_2)  e\left( \frac{\left( m - (a + bq ) p^{r-\ell} \right)\beta_2 \overline{q^\prime }}{ p^{r+ r_1}  }  \right).$$


We have $N\leq p^{r}$. Therefore, we have the inequality $p^{r_{1}} \leq q \leq Q = \sqrt{N/p^{\ell}} \leq p^{(r-\ell)/2} < p^{r-\ell}$. Thus, $\min\{r_{1},r-\ell\} = r_{1}$. Therefore the congruence $m-  a p^{r- \ell} \equiv 0  (\textrm{mod}\ p^{r_1}  )$ is same as $p^{r_{1} } \mid m$. 

Note that, since $\chi$ is a primitive character modulo $p^{r}$, the sum over $\beta_{2}$ is Gauss sum which vanishes unless $$ \left( \frac{m-(a+bq)p^{r-\ell}}{p^{r_{1}}},p\right)=1 \iff (m/p^{r_{1}},p)=1,$$
as $r_{1} < r-\ell$. In this case we have 

\begin{align*}
\sum_{\beta_2(p^r)} \chi(\beta_2)  e\left( \frac{\left( m - (a + bq ) p^{r-\ell} \right)\beta_2 \overline{q^\prime }}{ p^{r+ r_1}  }  \right)  = \chi (q^\prime) \overline{\chi} \left( \frac{m- (a + bq ) p^{r- \ell}}{p^{r_1}}\right) \sum_{\beta_2(p^r)}  \chi(\beta_2)  e\left( \frac{\beta_2 }{ p^{r}  }  \right) ,  
\end{align*}
Note that the last sum over $\beta_2$ is the Gauss sum. Which completes the proof of the lemma.

\end{proof}

After the Poisson summation formula, the sum $ S_{f,\chi}(N)$ is given by

	\begin{align}  
	S_{f,\chi}(N) &= \frac{1}{Qp^{\ell}} \int_{\mathbb{R}} \, \sum_{r_{1}=0 }^{\lfloor \frac{ \log Q}{\log p} \rfloor}    \sum_{\substack{1 \leq q^{\prime} \leq Q/p^{r_{1}} \\ (q^{\prime},p)=1}} \frac{g(p^{r_{1}}q^{\prime},x)}{p^{r_{1}}q^{\prime}}   \,\sideset{}{^\star}{\sum}_{a_{1} \rm mod \, p^{r_{1}}}\,  \sum_{b \,  \mathrm{mod} \, p^{\ell}} \notag\\
	&\times  \left\lbrace   \frac{ \tau_{\chi}   \chi (q^{\prime})  N }{ p^r }   \sum_{\substack{m^{\prime} \ll M_{0}/p^{r_{1}} \\ (m^{\prime},p)=1}}    \overline{\chi} \left( m^{\prime}- (a + bq ) p^{r- \ell-r_{1}} \right) \mathcal{I} (x , p^{r_{1}}q^{\prime}, p^{r_{1}}m^{\prime})\right\rbrace    \notag\\  
	&  \times \left\lbrace \sum_{n=1}^\infty  \lambda_f(n)   e\left(  \frac{ (a+ bq) n}{p^{\ell} q} \right) e\left(  \frac{ x n}{  p^{\ell} qQ} \right)  W\left( \frac{n}{N} \right)\right\rbrace dx + O_A \left(N^{-A} \right),\notag  
	\end{align}   
for any real $A >0$,
where  $a \, \rm mod \, q$ is determined in terms of $a_{1} \, \rm mod p^{r_{1}} $ and $m^{\prime}$. Indeed we have $a \, \equiv \,  m^{\prime} p^{2r_{1}} \overline{p^{r-\ell+r_{1}}}+a_{1} q^{\prime} \bar{q^{\prime}} \, \rm mod \, p^{r_{1}}q^{\prime}$. And $q=p^{r_{1}}q^{\prime}$. 

We now split the above expression for $S_{f,\chi}(N)$ as follows
$$S_{f,\chi}(N) = S_{f, \chi}(N; r_{1}=0 \text{ contribution}) + S_{f, \chi}(N; r_{1} \geq 1 \text{ contribution}).$$
Since $r_{1} \geq 1$ implies $(a+bq,p^{\ell}q)=1$, and then we can apply Voronoi summation formula directly. But, if $r_{1}=0$, then $a+bq $ may not be coprime to $p^{\ell}$. Therefore we can not directly apply Voronoi summation. So we need to work with these two different situations. Note that these cases can be dealt in a similar fashion. 

From now on we only focus on estimation of $S_{f, \chi}(N; r_{1}=0 \text{ contribution})$. In a similar way we can estimate $ S_{f, \chi}(N; r_{1} \geq 1 \text{ contribution})$ and even get better estimates in this case. 

\begin{align}\label{afterpoisson}
S_{f,\chi}(N; r_{1}=0 \, \text{contri}) &= \frac{1}{Qp^{\ell}} \int_{\mathbb{R}} \,     \sum_{\substack{1 \leq q \leq Q \\ (q,p)=1}} \frac{g(q,x)}{q}   \,  \sum_{b \,  \mathrm{mod} \, p^{\ell}} \notag\\
&\times \left\lbrace   \frac{ \tau_{\chi}   \chi (q)  N }{ p^r }   \sum_{\substack{m \ll M_{0} \\ (m,p)=1}}    \overline{\chi} \left( m- (a + bq ) p^{r- \ell} \right) \mathcal{I} (x , q, m)\right\rbrace    \notag\\  
&  \times \left\lbrace \sum_{n=1}^\infty  \lambda_f(n)   e\left(  \frac{ (a+ bq) n}{p^{\ell} q} \right) e\left(  \frac{ x n}{  p^{\ell} qQ} \right)  W\left( \frac{n}{N} \right)\right\rbrace dx,
\end{align}
where $a \equiv m \, \overline{p^{r-\ell}} \, \rm mod \, q$. Note that $(m,q)=1$.

\subsection{Application of Voronoi summation formula}
Now we appeal to an application of Vornoi summation formula on the  $n$-sum in \eqref{afterpoisson}. Recall from \eqref{nsum} that  $\mathcal{S}_{f}(N;a,b,q,x)$ is same as this $n$-sum. The application of Voronoi summation formula leads to the following lemma.

\begin{lemma} \label{applicavoronoi}
Let $(a+bq,p^{\ell}) = p^{\ell_{1}}$ for some $0 \leq \ell_{1} \leq \ell$. Then we have 
$$\mathcal{S}_{f}(N;a,b,q,x) = \frac{2 \pi i^{k} N^{3/4}}{p^{(\ell-\ell_{1})/2}q^{1/2}} \sum _{\varepsilon \in \{\pm \}} \sum _{1 \leq n \ll N_{0}} \frac{\lambda_{f}(n)}{n^{1/4}} e\left(-\frac{\overline{\left((a+bq)/p^{\ell_{1}}\right)}n}{p^{\ell -\ell_{1}}q}\right) \, \mathcal{J}(\varepsilon, q,x,n),$$
where  
$$\mathcal{J}(\varepsilon, q,x,n)= \int W_{1,\varepsilon}(y) \,e\left(\frac{xNy}{p^{\ell} qQ}\right) e\left(\frac{\varepsilon 2 \sqrt{nNy}}{p^{\ell-\ell_{1}}q}\right) \, \rm dy,$$
and $N_{0}= p^{\ell -2 \ell_{1}} \, N^{\epsilon}$.

\end{lemma}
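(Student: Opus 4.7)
The plan is to apply the classical $GL(2)$ Voronoi summation formula after reducing the additive character to coprime form. Since $(q,p)=1$ and $(a,q)=1$, we have $(a+bq,q)=1$, hence $(a+bq,p^\ell q)=p^{\ell_1}$. Writing $c:=(a+bq)/p^{\ell_1}$ and $d:=p^{\ell-\ell_1}q$, we get $(c,d)=1$, so \eqref{nsum} becomes
\[\mathcal{S}_f(N;a,b,q,x) = \sum_{n\geq 1}\lambda_f(n)\,e\!\left(\tfrac{cn}{d}\right)\phi(n), \qquad \phi(y):=e\!\left(\tfrac{xy}{p^\ell qQ}\right)W\!\left(\tfrac{y}{N}\right).\]

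Next I would invoke Voronoi summation for the level-one holomorphic Hecke cusp form $f$ of weight $k$: for $(c,d)=1$,
\[\sum_{n \geq 1} \lambda_f(n)\, e\!\left(\tfrac{cn}{d}\right) \phi(n) = \frac{2\pi i^k}{d} \sum_{n \geq 1} \lambda_f(n)\, e\!\left(-\tfrac{\bar c n}{d}\right) \int_0^\infty \phi(y)\, J_{k-1}\!\left(\tfrac{4\pi\sqrt{ny}}{d}\right) dy.\]
I would then rescale $y = Ny'$ and insert the large-argument asymptotic
\[J_{k-1}(z) = z^{-1/2}\bigl(e^{iz}U_+(z) + e^{-iz}U_-(z)\bigr),\]
where $U_\pm$ are smooth symbols of order $0$. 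Absorbing the amplitude $(nNy'/d^2)^{-1/4}$ together with $U_\pm$ into new weight functions $W_{1,\pm}(y')$, the resulting integral is exactly $\mathcal{J}(\varepsilon,q,x,n)$. Tracking constants --- the factor $N$ from the rescaling, $d^{1/2}/(nN)^{1/4}$ from the Bessel amplitude, and $2\pi i^k/d$ from Voronoi --- produces the claimed prefactor $2\pi i^k N^{3/4}/(p^{(\ell-\ell_1)/2}q^{1/2} n^{1/4})$.

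Finally, the truncation $n \ll N_0 = p^{\ell-2\ell_1}N^\epsilon$ follows by repeated integration by parts in $\mathcal{J}$. On the support $y\asymp 1$, the phase derivative is $\frac{xN}{p^\ell qQ} + \frac{\varepsilon\sqrt{nN/y}}{p^{\ell-\ell_1}q}$, and the second term, of size $\sqrt{nN}/(p^{\ell-\ell_1}q)$, dominates for the relevant range of $x$ (where $g(q,x)$ is not yet in its rapid-decay tail). Using $q \leq Q = \sqrt{N/p^\ell}$, i.e.\ $q^2 \leq N/p^\ell$, one checks that $n > N_0$ forces this derivative to exceed $N^{\epsilon/2}$, so that each integration by parts saves a factor $N^{-\epsilon/2}$, yielding arbitrary polynomial decay.

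The main analytic point requiring care is the transition region where $4\pi\sqrt{nNy}/d$ is not yet large enough for the leading Bessel asymptotic to apply. There one must either peel off these very small $n$ contributions (using $J_{k-1}(z)\sim z^{k-1}$ near the origin and absorbing them into an acceptable error), or invoke a uniform integral representation of $J_{k-1}$ that yields the same shape $\mathcal{J}(\varepsilon,q,x,n)$ uniformly throughout $n \leq N_0$. Once this bookkeeping is settled, the rest is a routine Voronoi-plus-asymptotic computation.
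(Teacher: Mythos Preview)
Your proposal is correct and follows essentially the same route as the paper: apply Voronoi summation with modulus $p^{\ell-\ell_1}q$ after extracting the common factor $p^{\ell_1}$, rescale $y\mapsto Ny$, insert the large-argument Bessel asymptotic (the paper writes this as $J_{k-1}(x)=x^{-1/2}\sum_\varepsilon W_{k,\varepsilon}(x)e^{i\varepsilon x}$ and sets $W_{1,\varepsilon}(y)=W(y)W_{k,\varepsilon}(4\pi\sqrt{nNy}/(p^{\ell-\ell_1}q))$), and truncate via repeated integration by parts. Your treatment is in fact slightly more careful than the paper's, since you explicitly verify the coprimality $(c,d)=1$ and flag the transition region for the Bessel asymptotic, both of which the paper passes over in silence.
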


\begin{proof}
An application of Voronoi summation transfers the  $\mathcal{S}_{f}(N;a,b,q,x) $ into
$$ \frac{2 \pi i^{k} }{p^{\ell -\ell_{1}} q} \sum_{n =1}^{\infty} \lambda_{f}(n) \, e\left(-\frac{\overline{\left((a+bq)/p^{\ell_{1}}\right)}n}{p^{\ell -\ell_{1}}q}\right) \, \int_{\mathbb{R}} W(\frac{y}{N}) \, e\left(\frac{xy}{p^{\ell} qQ}\right) \, J_{k-1}\left(\frac{4\pi \sqrt{yn}}{p^{\ell-\ell_{1}}q}\right) \, \rm d y,$$
where $k$ is the weight of the holomorphic Hecke-eigenform $f$ and $J_{k-1}(x)$ is the Bessel function. We make a change of variables $y/N \mapsto z$ in the above integration and use the expression 
$$J_{k-1}(x) = \frac{1}{\sqrt{x}} \sum_{\varepsilon \in \{\pm \}} W_{k, \varepsilon}(x) \, e^{i\varepsilon x},$$
where $x^{j} W^{(j)}_{k, \varepsilon}(x)\ll_{k,j} 1 $ for $x \gg 1$, for Bessel function to get 
$$\frac{N^{3/4} p^{(\ell -\ell_{1})/2} q^{1/2}}{n^{1/4}} \sum _{\varepsilon \in \{\pm \}} \int W_{1,\varepsilon}(y) \,e\left(\frac{xNy}{p^{\ell} qQ}\right) e\left(\frac{\varepsilon 2 \sqrt{nNy}}{p^{\ell-\ell_{1}}q}\right) \, \rm dy,$$
where $W_{1,\varepsilon}(y)= W(y) W_{k,\varepsilon}\left( \frac{4\pi \sqrt{nNy}}{p^{\ell-\ell_{1}}q}\right)$ which satisfies $y^{j} W^{(j)}_{1,\epsilon}(y) \ll_{k,j} 1$. We see , repeated integration by parts, that the above integral in negligibly small unless
$$1 \leq n \ll N_{0}= p^{\ell -2 \ell_{1}} \, N^{\epsilon}.$$
Thus the lemma follows.

\end{proof}

By noting the following identities 
\begin{align*}
\frac{\left(\overline{(a+bq)/p^{\ell_{1}}}\right)}{p^{\ell - \ell_{1}} q} &= \frac{\bar{a} p^{\ell_{1}} p^{\ell-\ell_{1}} \overline{p^{\ell - \ell_{1}}}+\overline{\left((a+bq)/p^{\ell_{1}}\right)}q\bar{q}}{p^{\ell_{1}-\ell_{1}} q} \\
&=  \frac{\overline{\left((a+bq)/p^{\ell_{1}}\right)} \, \bar{q} \, }{p^{\ell -\ell_{1} } } +\frac{\bar{m} p^{r} \overline{p^{2(\ell -\ell_{1})}}}{q},
\end{align*}
in the second equality  we have used the fact that where $a \equiv m \, \overline{p^{r-\ell}} \, \rm mod \, q$, 
and by arranging all terms  we get the following proposition.
\begin{proposition}
	We have
	
\begin{align*}
S_{f,\chi}(N; r_{1}=0 \, \text{contri}) &= \frac{\tau_{\chi} N^{7/4} 2 \pi i^{k} }{Q p^{r+\frac{3\ell }{2}}} \sum_{\varepsilon \in \{ \pm \}}\, \sum_{\ell_{1} =0}^{\ell } p^{\ell_{1}/2} \, T(\varepsilon, \ell_{1},N) 
\end{align*}
where 
\begin{align*}
T(\varepsilon, \ell_{1},N) &= \sum _{1 \leq n \ll N_{0}} \frac{\lambda_{f}(n)}{n^{1/4}} \sum_{ \substack{1 \leq q \leq Q \\ (q,p)=1}} \frac{\chi(q)}{q^{3/2}}  \sum_{\substack{m \ll M_{0} \\ (m,p)=1}}  \sideset{}{^\dagger}{\sum }_{ \beta \, \rm mod \, p^{\ell -\ell_{1}}}  \\
& \times   \overline{\chi} \left( m- (a + bq ) p^{r- \ell} \right)  e\left(-\frac{\overline{\left((a+bq)/p^{\ell_{1}}\right)} \, \bar{q}n \, }{p^{\ell -\ell_{1} } } - \frac{\bar{m} p^{r} \overline{p^{2(\ell -\ell_{1})}}n}{q}\right) \, \mathfrak{I}(\varepsilon, q,n,m),
\end{align*}
where $b= -a \bar{q}+\beta p^{\ell_{1}}$, the symbol $\dagger$ on the $b$-sum means that $((a+bq)/p^{\ell_{1}},p)=1$, and 
$$\mathfrak{I}(...)= \int_{\mathbb{R}} g(q,x) \, \mathcal{J}(\varepsilon, q,x,n) \, \mathcal{I}( q,x,m) \, \rm dx.$$
	
\end{proposition}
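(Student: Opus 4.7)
The plan is to substitute the Voronoi-transformed $n$-sum from Lemma~\ref{applicavoronoi} into the expression \eqref{afterpoisson}, reorganize the $b$-sum according to the $p$-adic valuation $\ell_{1}$ of $a+bq$, and collect the prefactors.

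First I would split the $b$-sum modulo $p^{\ell}$ according to $\ell_{1} := v_{p}(a+bq)$, which ranges from $0$ to $\ell$. Since $(q,p)=1$, for each fixed $\ell_{1}$ the congruence $a+bq\equiv 0 \pmod{p^{\ell_{1}}}$ forces $b \equiv -a\bar q \pmod{p^{\ell_{1}}}$, so I would parametrize $b = -a\bar q + \beta p^{\ell_{1}}$ with $\beta$ running modulo $p^{\ell-\ell_{1}}$ and impose the coprimality condition $((a+bq)/p^{\ell_{1}},p)=1$ via the dagger on the $\beta$-sum. With this parametrization, Lemma~\ref{applicavoronoi} applies and turns the $n$-sum into
\[
\mathcal{S}_{f}(N;a,b,q,x) \; = \; \frac{2\pi i^{k} N^{3/4}}{p^{(\ell-\ell_{1})/2}q^{1/2}}\sum_{\varepsilon\in\{\pm\}}\sum_{1\le n\ll N_{0}}\frac{\lambda_{f}(n)}{n^{1/4}}\,e\!\left(-\tfrac{\overline{((a+bq)/p^{\ell_{1}})}\,n}{p^{\ell-\ell_{1}}q}\right)\mathcal{J}(\varepsilon,q,x,n).
\]

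Next I would plug this into \eqref{afterpoisson} and combine all the $x$-dependent pieces $g(q,x)$, $\mathcal{J}(\varepsilon,q,x,n)$ and $\mathcal{I}(x,q,m)$ into the single integral $\mathfrak{I}(\varepsilon,q,n,m)$ declared in the proposition. The additive character $e\bigl(-\overline{((a+bq)/p^{\ell_{1}})}n/(p^{\ell-\ell_{1}}q)\bigr)$ is then decoupled along the coprime moduli $p^{\ell-\ell_{1}}$ and $q$ via CRT, using exactly the identity displayed just before the proposition; the component modulo $q$ simplifies through the relation $a\equiv m\,\overline{p^{r-\ell}}\pmod{q}$ coming from Lemma~\ref{charsumlema}, producing the second phase $-\bar m p^{r}\,\overline{p^{2(\ell-\ell_{1})}}\,n/q$.

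Finally I would collect the scalar factors: the global $\frac{1}{Qp^{\ell}}$ in \eqref{afterpoisson}, the $\tau_{\chi}\chi(q)N/p^{r}$ from the evaluated character sum, and the Voronoi prefactor $2\pi i^{k} N^{3/4}/(p^{(\ell-\ell_{1})/2}q^{1/2})$ multiply to give $\frac{2\pi i^{k}\tau_{\chi} N^{7/4}}{Q\,p^{r+3\ell/2}}\cdot p^{\ell_{1}/2}$ times $\chi(q)/q^{3/2}$, matching the shape stated in the proposition and leaving the inner double sum over $n,q,m,\beta$ as $T(\varepsilon,\ell_{1},N)$. This is essentially a bookkeeping exercise; the only delicate step is the CRT decoupling of the additive phase together with correctly tracking how $a$ is determined by $m$ modulo $q$ and by $a_{1}$ modulo $p^{r_{1}}$ (here $r_{1}=0$), so that the $\beta$-sum ends up as an unrestricted sum over residues modulo $p^{\ell-\ell_{1}}$ with only the coprimality condition retained. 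No obstacle beyond that arises, since $\mathfrak{I}$ absorbs the entire analytic content.
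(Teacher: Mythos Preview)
Your proposal is correct and follows essentially the same approach as the paper: the paper's own argument is exactly the CRT identity displayed immediately before the proposition together with ``arranging all terms,'' which is precisely the decoupling-and-bookkeeping you describe (splitting the $b$-sum by $\ell_{1}=v_{p}(a+bq)$, inserting Lemma~\ref{applicavoronoi}, packaging the $x$-integral into $\mathfrak{I}$, and tracking the prefactors). Your scalar-factor computation is also correct.
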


\subsection{Bounds for integral $\mathfrak{I}(\varepsilon, q,n,m)$}
In this subsection we give bounds for the integral $\mathfrak{I}(\varepsilon, q,n,m)$ which is useful when we will deal with small $q$ (note that phase functions appear in $\mathfrak{I}(\varepsilon, q,n,m)$ oscillates when $q$ is small).

\begin{lemma} \label{integralbound}
We have
$$\mathfrak{I}(\varepsilon, q,n,m) \ll \frac{p^{\ell} qQ}{N} \, N^{\epsilon}.$$
	
\end{lemma}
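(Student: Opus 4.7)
The plan is to swap the order of integration in $\mathfrak{I}$ so that the $x$-integral is taken first, thereby producing a Fourier transform of the DFI weight $g(q,\cdot)$, and then to exploit smoothness of $g$ to confine the remaining $(y,z)$-integration to a thin strip near the diagonal. Substituting the definitions of $\mathcal{J}(\varepsilon,q,x,n)$ and $\mathcal{I}(q,x,m)$ and interchanging the order of integration produces
\[
\mathfrak{I}(\varepsilon,q,n,m) = \int_1^2\!\!\int_1^2 W_{1,\varepsilon}(y)\, V(z)\, e\!\left(\frac{2\varepsilon\sqrt{nNy}}{p^{\ell-\ell_1}q} - \frac{Nmz}{p^r q}\right)\, \widehat{g}\!\left(\frac{N(y-z)}{p^\ell q Q}\right) dy\,dz,
\]
where $\widehat{g}(\xi) := \int_{\mathbb{R}} g(q,x)\, e(x\xi)\, dx$ and $p^{\ell} qQ/N = q/Q$ since $Q^{2} = N/p^{\ell}$.

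Next I would establish two features of $\widehat{g}$: first, the trivial bound $|\widehat{g}(\xi)| \leq \int|g(q,x)|\,dx \ll Q^{\epsilon}$ coming directly from the third listed property of $g$; and second, rapid decay $|\widehat{g}(\xi)| \ll_A (1+|\xi|)^{-A} N^{\epsilon}$ for every $A$, obtained by repeated integration by parts in $x$ using the derivative estimates $x^{j}\partial_{x}^{j} g \ll (\log Q)\min(Q/q,1/|x|)$ coupled with the tail bound $g \ll |x|^{-A}$. Together these imply that $\widehat{g}(N(y-z)/(p^{\ell}qQ))$ is essentially supported where $|y-z| \ll (q/Q)N^{\epsilon}$. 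Restricting the outer integration to this strip (which has Lebesgue measure $O((q/Q)N^{\epsilon})$ inside $[1,2]^{2}$) and using $|W_{1,\varepsilon}|, |V| \ll 1$ together with $|\widehat{g}| \ll N^{\epsilon}$ yields
\[
\mathfrak{I}(\varepsilon,q,n,m) \ll \frac{q}{Q}\, N^{\epsilon} = \frac{p^{\ell} q Q}{N}\, N^{\epsilon},
\]
which is the desired bound.

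The main obstacle is the rigorous verification of the rapid decay of $\widehat{g}$. The stated derivative bounds permit $|\partial_{x}^{j} g|$ to grow like $|x|^{-j}$ as $x \to 0$, so the integration-by-parts argument must be combined with a dyadic split of the $x$-range around the transition scale $|x| \sim q/Q$: on the central interval one uses the size bound $|g| \ll Q/q$, while on the tail $|x| \gg q/Q$ the bound $|\partial_{x}^{j} g| \ll 1/|x|^{j+1}$ is integrable. This manipulation is standard for DFI-type $\delta$-weights, but it is the only nontrivial analytic input in the proof; everything else is bookkeeping with the explicit phases.
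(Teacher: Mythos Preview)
Your Fourier-transform route is different from the paper's, and the idea of taking the $x$-integral first is natural, but the key analytic claim fails. The decay $|\widehat g(\xi)|\ll_A (1+|\xi|)^{-A}N^\epsilon$ cannot be extracted from the listed properties of $g$: the bound $x^j\partial_x^j g\ll(\log Q)\min(Q/q,1/|x|)$ allows $\partial_x^j g$ to be as large as $(\log Q)(Q/q)|x|^{-j}$ near $x=0$, so $\int|\partial_x^j g|\,dx$ need not be finite and integration by parts gives nothing. Your own dyadic patch already exhibits the problem: on the central interval $|x|\le q/Q$ you use only $|g|\ll (Q/q)\log Q$, which contributes $(q/Q)\cdot(Q/q)\log Q=\log Q$ to $\widehat g(\xi)$ with \emph{no} decay in $\xi$. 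In fact the DFI weight genuinely varies on scale $q/Q$, so $\widehat g$ is spread over $|\xi|\lesssim Q/q$, not $|\xi|\lesssim 1$. Hence the ``thin strip'' $|y-z|\ll(q/Q)N^{\epsilon}$ is illusory, and your argument as written yields only the trivial bound $\mathfrak I\ll N^\epsilon$ (and even Plancherel plus Cauchy--Schwarz gives at best $(q/Q)^{1/2}N^\epsilon$, still short of the target).

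The paper proceeds differently. It first integrates by parts in the $z$-integral, which localizes $x$ to an interval of length $\ll (p^\ell qQ/N)N^\epsilon$ about $-mQ/p^{r-\ell}$; this is where the factor $p^\ell qQ/N$ actually enters. On the range $|x|\le N^{-\epsilon}$ one has $g(q,x)=1+O(N^{-A})$ and the short $x$-range already gives the bound. On $N^{-\epsilon}\le|x|\le N^\epsilon$, stationary phase in the $y$-integral gains a factor $(p^\ell qQ/N)^{1/2}$, and then Cauchy--Schwarz over the short $x$-interval combined with $\int|g(q,x)|^2\,dx\ll Q^\epsilon$ gains another $(p^\ell qQ/N)^{1/2}$. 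The point your argument misses is that the saving has to come from the oscillation of the $z$-phase in the $x$-variable (which localizes $x$), not from smoothness of $g$ alone.
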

\begin{proof}
Recall that $\mathfrak{I}(\varepsilon, q,n,m)$ is give by
$$\mathfrak{I}(...)= \int_{\mathbb{R}} g(q,x) \, \mathcal{J}(\varepsilon, q,x,n) \, \mathcal{I}( q,x,m) \, \rm dx.$$
The function $q(q,x)$ is negligible unless $|x| \leq N^{\epsilon}$. Therefore we have 
\begin{align} \label{iyintegral}
\mathfrak{I}(...)= \int_{|x| \leq N^{\epsilon}}  \, g(q,x) \,& \int_{\mathbb{R}} W_{1,\varepsilon}(y) \,e\left(\frac{xNy}{p^{\ell} qQ}\right) e\left(\frac{\varepsilon 2 \sqrt{nNy}}{p^{\ell-\ell_{1}}q}\right) \notag\\
&\int_{\mathbb{R}} V (z)  e\left(  \frac{ - N x z}{  p^{\ell}  qQ} \right) e\left(  \frac{ - N m z }{  p^{r} q} \right) dz \, \rm dy \, \rm dx +O\left(N^{-2022}\right).
\end{align}
Now we consider the  $z$ integral. By repeated integration by parts we see that $z$ integral is negligible unless 
$$ \Big \vert \frac{Nx}{p^{\ell} qQ} + \frac{Nm}{p^{r} q} \Big \vert  \ll N^{\epsilon} \iff \Big \vert  x+\frac{mQ}{p^{r-\ell} } \Big \vert \ll \frac{p^{\ell} q Q}{N} N^{\epsilon}.$$

We know, by properties of the function $g(q,x)$, see Section \ref{circlemethod}, that 
$$g(q,x)=1 +O\left(N^{-2022}\right),$$
if $q\leq Q^{1-\epsilon}$ or $|x| \leq N^{-\epsilon}$. 
Therefore we divide $x$-integral into two parts. Indeed, we write
$$\mathfrak{I}(\varepsilon, q,n,m)= \left(\int_{ \substack{ |x|\leq N^{-\epsilon} \\\Big \vert  x+\frac{mQ}{p^{r-\ell} } \Big \vert \ll \frac{p^{\ell} q Q}{N} N^{\epsilon} }} + \int_{\substack{ N^{- \epsilon} \leq |x|\leq N^{\epsilon} \\\Big \vert  x+\frac{mQ}{p^{r-\ell} } \Big \vert \ll \frac{p^{\ell} q Q}{N} N^{\epsilon} }}\right) g(q,x) \lbrace ... \rbrace \, dx \, dy \, dz +O\left(N^{-2022}\right).$$
In the first integral we can replace $g(q,x)$ by $1$ up to a negligible error term. We treat everything else trivially in the first $x$-integral to get this integral to be
$$\ll \frac{p^{\ell} q Q}{N} N^{\epsilon}.$$

In the second $x$-integral, we have the condition that $ N^{-\epsilon} \leq |x| \leq N^{\epsilon}$. In this case we consider $y$-integral in \eqref{iyintegral}. In this integral we make change of variable $y \to y^{2}$, then the resulting expression of this integral is given by 
$$\int_{\mathbb{R}} 2 \, y \, W_{1,\varepsilon}(y^2) \, \, e\left(f(y)\right)  \, \rm dy,$$
where the phase function 
$$f(y)= \frac{xNy^{2}}{p^{\ell} qQ} +\frac{\varepsilon 2 \sqrt{nN}y}{p^{\ell-\ell_{1}}q}.$$
The stationary point $y_{0}$ of $f(y)$ is given by $y_{0}=  \varepsilon \sqrt{n} Qp^{\ell_{1}}/x\sqrt{N}$, and  we have
$$f^{(\prime \prime)}(y_{0}) = \frac{2xN}{p^{\ell} qQ}.$$
Thus we have $$ \frac{1}{\sqrt{|f^{(\prime \prime)}(y_{0})|}} \ll \sqrt{\frac{p^{\ell} qQ}{N}} N^{\epsilon}.$$
Therefore, by stationary method this $y$-integral is at most
$$\ll \sqrt{\frac{p^{\ell} qQ}{N}} N^{\epsilon}.$$
Thus, the second $x$-integral is at most
\begin{align*}
&\ll \sqrt{\frac{p^{\ell} qQ}{N}} N^{\epsilon} \, \int_{\substack{ N^{- \epsilon} \leq |x|\leq N^{\epsilon} \\\Big \vert  x+\frac{mQ}{p^{r-\ell} } \Big \vert \ll \frac{p^{\ell} q Q}{N} N^{\epsilon} }} \vert q(q,x)\vert \, dx \\
&\ll \frac{p^{\ell} qQ}{N} N^{\epsilon} \, \int_{\mathbb{R}} |g(q,x)|^{2} \, dx \\
&\ll \frac{p^{\ell} qQ}{N} N^{\epsilon}.
\end{align*}
We have used the $L^{2}$-bound for the function $g(q,x)$ from the Section \ref{circlemethod}.
\end{proof}

\section{Cauchy and Poisson}

An application of the Cauchy's inequality on the $n$ sum in $T(\varepsilon, \ell_{1},N)$ along with Ramanujan bound for the Fourier coefficients $\lambda_{f}(n)$ gives that

\begin{equation} \label{cauchyappli}
T(\varepsilon, \ell_{1},N) \ll N_{0}^{1/4} \, \Theta^{1/2}
\end{equation}
where
\begin{align*}
\Theta  = \sum_{n} W_{2} \left(\frac{n}{N_{0}}\right)&\Big\vert  \sum_{ \substack{1 \leq q \leq Q \\ (q,p)=1}} \frac{\chi(q)}{q^{3/2}}  \sum_{\substack{m \ll M_{0} \\ (m,p)=1}}  \sideset{}{^\dagger}{\sum }_{ \beta \, \rm mod \, p^{\ell -\ell_{1}}}  \\ & \times \overline{\chi} \left( m- (a + bq ) p^{r- \ell} \right)  e\left(-\frac{\overline{\left((a+bq)/p^{\ell_{1}}\right)} \, \bar{q}n \, }{p^{\ell -\ell_{1} } } - \frac{\bar{m} p^{r} \overline{p^{2(\ell -\ell_{1})}}n}{q}\right) \, \mathfrak{I}(...)\Big \vert^{2},
\end{align*}
where $W_{2}$ is smooth bump function supported in $[1,2]$.
After opening the absolute square and interchanging  sums we arrive at

\begin{align*}
\Theta = \mathop{\sum \sum }_{\substack{1 \leq q_1, q_2 \leq Q \\ (q_1q_2,p)=1} } \frac{\chi\left(q_{1}q_{2}\right)}{q_{1}^{3/2} q_{2}^{3/2}} &\mathop{\sum \sum}_{\substack{m_{1},m_{2} \ll M_0 \\(m_1m_2,p)=1 }} \sideset{}{^\dagger}{\sum }_{ \beta_{1} \, \rm mod \, p^{\ell -\ell_{1}}} \sideset{}{^\dagger}{\sum }_{ \beta_{2} \, \rm mod \, p^{\ell -\ell_{1}}} \\
&\times \overline{\chi} \left( m_{1}- (a_{1} + b_{1}q_{1} ) p^{r- \ell} \right) \chi \left( m_{2}- (a_{2} + b_{2}q_{2} ) p^{r- \ell}\right) \, T(...),
\end{align*}
where
\begin{align*}
T(m_{1},m_{2},q_{1},q_{2},b_{1},b_{2}) &= \sum_{n \in \mathbb{Z}} W_{2} \left(\frac{n}{N_{0}}\right) e\left(\frac{\overline{\left((a_{1}+b_{1}q_{1})/p^{\ell_{1}}\right)} \,\bar{q}_{1} n- \overline{\left((a_{2}+b_{2}q_{2})/p^{\ell_{1}}\right)} \,\bar{q}_{2} n }{p^{\ell -\ell_{1} }} \right)\\
& \times \left(\frac{\bar{m}_{1} p^{r}\overline{p^{2 (\ell -\ell_{1})}} n}{q_{1}} -\frac{\bar{m}_{2} p^{r}\overline{p^{2 (\ell -\ell_{1})}} n}{q_{2}} \right) \mathfrak{I}(\varepsilon, q_{1},n,m_{1}) \overline{\mathfrak{I}(\varepsilon, q_{2},n,m_{2})}.
\end{align*}
First we split the sum over $n$ into congruence classes modulo $p^{\ell -\ell_{1}} q_{1}q_{2}$. That is, for any $\alpha $ modulo $p^{\ell -\ell_{1}} q_{1} q_{2}$ we write $n = \alpha+ k p^{\ell -\ell_{1}} q_{1}q_{2}$ with $k \in \mathbb{Z}$. Then by applying Poisson summation on $k$ variable we arrive at the expression  
\begin{align*}
\frac{N_{0}}{p^{\ell -\ell_{1}} q_{1}q_{2}} \sum_{n \in \mathbb{Z}} \, &\sum_{ \alpha \, \mathrm{mod} \, p^{\ell -\ell_{1}} q_{1}q_{2}} e\left(\frac{\left(\overline{\left((a_{1}+b_{1}q_{1})/p^{\ell_{1}}\right)} \bar{q}_{1} q_{1}q_{2} -\overline{\left((a_{2}+b_{2}q_{2})/p^{\ell_{1}}\right)} \bar{q}_{2} q_{1}q_{2}\right) \alpha}{p^{\ell-\ell_{1}} q_{1}q_{2}}\right)\\
& \times e\left(\frac{\left(  p^{r}\overline{p^{2(\ell -\ell_{1})}}p^{\ell -\ell_{1}} \bar{m}_{1}q_{2}- p^{r}\overline{p^{2 (\ell- \ell_{1})}}p^{\ell -\ell_{1}} \bar{m}_{2}q_{1} +n\right) \, \alpha}{p^{\ell - \ell_{1}} q_{1}q_{2}}\right) \mathfrak{I}_{1}(n,q_{i},m_{i},\varepsilon),
\end{align*}
where
$$\mathfrak{I}_{1}(n,q_{i},m_{i},\varepsilon)= \int W_{2}(y) \, \mathfrak{I}(\varepsilon, q_{1},N_{0}y,m_{1}) \overline{\mathfrak{I}(\varepsilon, q_{2},N_{0}y,m_{2})} \, e\left(-\frac{nN_{0}y}{p^{\ell -\ell_{1}} q_{1}q_{2}}\right) \rm dy,$$
for $T(m_{1},m_{2},q_{1},q_{2},b_{1},b_{2})$. Note, by  repeated integration by parts, that  $\mathfrak{I}_{1}(n,q_{i},m_{i},\varepsilon)$ is negligibly small unless $$|n| \leq \frac{q_{1}q_{2}p^{\ell -\ell_{1}}}{N_{0}} N^{\epsilon}.$$
Therefore after executing the sum over $\alpha$, the value of $T(...) $ is given by
$$ N_{0} \mathop{\sum_{ \substack{|n| \leq \frac{q_{1}q_{2}p^{\ell -\ell_{1}}}{N_{0}} N^{\epsilon} }} \, \mathfrak{I}_{1}(...),}_{\overline{\left((a_{1}+b_{1}q_{1})/p^{\ell_{1}}\right)} \bar{q}_{1} q_{1}q_{2} -\overline{\left((a_{2}+b_{2}q_{2})/p^{\ell_{1}}\right)} \bar{q}_{2} q_{1}q_{2} + p^{r}\overline{p^{2 (\ell -\ell_{1})}}p^{\ell -\ell_{1}} \bar{m}_{1}q_{2}- p^{r}\overline{p^{2 (\ell -\ell_{1})}}p^{\ell-\ell_{1}} \bar{m}_{2}q_{1} +n \, \equiv \, 0 \, \mathrm{mod} \, p^{\ell-\ell_{1}} q_{1}q_{2}}$$
upto a negligible error term.

The above congruence relation gives that
$$\overline{\left(\left(a_{1}+b_{1}q_{1}\right)/p^{\ell_{1}}\right)} \, q_{2} - \overline{\left(\left(a_{2}+b_{2}q_{2}\right)/p^{\ell_{1}}\right)} \, q_{1}+n \, \equiv \, 0 \, \mathrm{mod} \, p^{\ell -\ell_{1}},$$
and
$$ p^{r -\ell+\ell_{1}} \bar{m}_{1} q_{2}-p^{r -\ell+\ell_{1}} \bar{m}_{2} q_{1} +n \, \equiv \, 0 \, \mathrm{mod} \, q_{1}q_{2}.$$ 

After changing the variables $(a_{1}+b_{1}q_{1})/p^{\ell_{1}} \mapsto \alpha_{1}$ and $(a_{2}+b_{2}q_{2})/p^{\ell_{1}} \mapsto \alpha_{2}$, we see that $\Theta $ is given by

\begin{align} \label{finalthetavalue}
\Theta = N_{0} \mathop{\sum \sum }_{\substack{1 \leq q_1, q_2 \leq Q \\ (q_1q_2,p)=1} } \frac{\chi\left(q_{1}q_{2}\right)}{q_{1}^{3/2} q_{2}^{3/2}} &\mathop{\mathop{\sum \sum}_{\substack{m_{1},m_{2} \ll M_0 \\(m_1m_2,p)=1 }} \sideset{}{^\star}{\sum }_{ \alpha_{1} \, \rm mod \, p^{\ell -\ell_{1}}} \sideset{}{^\star}{\sum }_{ \alpha_{2} \, \rm mod \, p^{\ell -\ell_{1}}} \sum_{ \substack{|n| \leq \frac{q_{1}q_{2}p^{\ell -\ell_{1}}}{N_{0}} N^{\epsilon} }}}_{\substack{\bar{\alpha}_{1} q_{2} - \bar{\alpha}_{2} q_{1}  + n \, \equiv \, 0 \, \mathrm{mod} \, p^{\ell-\ell_{1}} \\  p^{r -\ell+\ell_{1}} \bar{m}_{1} q_{2}-p^{r -\ell+\ell_{1}} \bar{m}_{2} q_{1} +n \, \equiv \, 0 \, \mathrm{mod} \, q_{1}q_{2}  }} \notag \\
&\times \overline{\chi} \left( m_{1}- \alpha_{1} p^{r- \ell+\ell_{1}} \right) \chi \left( m_{2}- \alpha_{2} p^{r- \ell+\ell_{1}}\right) \mathfrak{I}_{1}(n,q_{i},m_{i},\varepsilon).
\end{align}


\subsection{Zero frequency $n=0$: }
We write $\Theta_{\text{zero}}$ for the contribution of the zero frequency to $\Theta$. In the following lemma we give estimates for $\Theta_{\text{zero}}$.

\begin{lemma} \label{zerofrequency}
We have 
$$\Theta_{\text{zero}} \ll \frac{p^{r+\frac{5\ell}{2} -3\ell_{1}}}{N^{3/2}} N^{\epsilon},$$
provided that $ N \geq p^{r -(\ell -\ell_{1})}$.
\end{lemma}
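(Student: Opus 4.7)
The plan is to substitute $n = 0$ in \eqref{finalthetavalue} and exploit the resulting congruences to collapse the sum, then bound what remains via Lemma \ref{integralbound} together with character cancellation in the $\alpha$-variable.

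First, setting $n = 0$ the two congruences become $\bar{\alpha}_1 q_2 \equiv \bar{\alpha}_2 q_1 \pmod{p^{\ell-\ell_1}}$ and $p^{r-\ell+\ell_1}(\bar{m}_1 q_2 - \bar{m}_2 q_1) \equiv 0 \pmod{q_1 q_2}$. Since $(q_1 q_2,\,p) = 1$, the second reduces to $\bar{m}_1 q_2 \equiv \bar{m}_2 q_1 \pmod{q_1 q_2}$; using $(m_i, q_i) = 1$ (which follows from the non-vanishing condition $a \equiv m\,\overline{p^{r-\ell}} \pmod{q}$ of Lemma \ref{charsumlema} combined with the primitivity $(a,q)=1$), reducing mod $q_1$ forces $q_1 \mid q_2$, and symmetrically $q_2 \mid q_1$. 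Hence $q_1 = q_2 =: q$ and $m_1 \equiv m_2 \pmod{q}$. The first congruence, with $q_1 = q_2$ and $(q,p) = 1$, then forces $\alpha_1 = \alpha_2 =: \alpha$.

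Next I would analyse the inner $\alpha$-sum of characters
\[
\sum_{\alpha \in (\mathbb{Z}/p^{\ell-\ell_1})^*} \overline{\chi}\bigl(m_1 - \alpha p^{r-\ell+\ell_1}\bigr)\,\chi\bigl(m_2 - \alpha p^{r-\ell+\ell_1}\bigr).
\]
Factoring $\chi(m_i - \alpha p^{r-\ell+\ell_1}) = \chi(m_i)\,\chi(1 - \alpha p^{r-\ell+\ell_1}\overline{m_i})$ and invoking the $p$-adic logarithmic description of $\chi$ on principal units $1 + p^{r-\ell+\ell_1}\mathbb{Z}_p$ (valid for odd $p$, which is why the hypothesis on $p$ is imposed), this collapses to a Ramanujan-type sum in $\alpha$ whose magnitude is controlled by $(m_1 - m_2,\,p^{\ell-\ell_1})$. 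Off-diagonal pairs $(m_1,m_2)$ are thereby strongly suppressed, so the effective pair count is reduced from the naive $M_0^2/q$ to essentially the diagonal count $M_0$.

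Finally, bounding the remaining integral by Lemma \ref{integralbound} gives $|\mathfrak{I}_1(0,q,q,m_1,m_2,\varepsilon)| \ll (p^{\ell}qQ/N)^2 N^{\epsilon}$, so after taking absolute values
\[
\Theta_{\text{zero}} \ll N^{\epsilon}\,N_0 \sum_{\substack{1 \leq q \leq Q \\ (q,p)=1}} \frac{1}{q^3}\cdot p^{\ell-\ell_1}\cdot M_0 \cdot \frac{p^{2\ell}q^2 Q^2}{N^2}.
\]
Executing the $q$-sum (which is $O(\log Q) = N^{\epsilon}$) and substituting $N_0 = p^{\ell-2\ell_1}$, $M_0 = p^r Q/N$, and $Q^2 = N/p^{\ell}$ yields the claimed bound $p^{r + 5\ell/2 - 3\ell_1}/N^{3/2}\cdot N^{\epsilon}$. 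The hypothesis $N \geq p^{r-(\ell-\ell_1)}$ (equivalently $M_0 \leq p^{\ell-\ell_1}Q$) is what guarantees that the residual off-diagonal contribution is absorbed.

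The main obstacle is the character cancellation step. Without it, the naive off-diagonal pair count $M_0^2/q$ dominates the diagonal $M_0$ throughout the range $q \leq Q$ (since $M_0 \geq Q$ whenever $N \leq p^{r}$), so a direct trivial count yields a strictly weaker bound of the shape $p^{2\ell + 2r - 3\ell_1}/N^2$. Extracting the correct $p$-adic cancellation from the character sum over $\alpha$ is precisely what brings the off-diagonal contribution down to diagonal size and makes the stated bound go through.
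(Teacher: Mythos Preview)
Your proposal is correct and follows essentially the same route as the paper: collapse the $n=0$ congruences to force $q_1=q_2$, $\alpha_1=\alpha_2$, and $q\mid m_1-m_2$; then extract cancellation in the $\alpha$-sum via the $p$-adic expansion of $\chi$ on $1+p^{\,r-(\ell-\ell_1)}\mathbb{Z}_p$; finally compare diagonal versus off-diagonal using the hypothesis $N\geq p^{\,r-(\ell-\ell_1)}$. The only cosmetic difference is that the paper writes the $\alpha$-sum explicitly as a quadratic exponential sum (linear term $A_2(\bar m_1-\bar m_2)\alpha$ plus a higher-order quadratic term carrying an extra $p^{\,r-(\ell-\ell_1)}$) and observes it vanishes unless $p^{\ell-\ell_1}\mid m_1-m_2$, whereas you phrase the same phenomenon as a ``Ramanujan-type'' sum controlled by $(m_1-m_2,p^{\ell-\ell_1})$; both descriptions lead to the same off-diagonal suppression, and your endgame arithmetic producing $p^{\,r+5\ell/2-3\ell_1}/N^{3/2}$ matches the paper's diagonal term exactly.
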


\begin{proof}
For $n=0$, the congruence conditions in \eqref{finalthetavalue} becomes 
\begin{align*}
&\bar{\alpha}_{1} q_{2} - \bar{\alpha}_{2} q_{1}  \, \equiv \, 0 \, \mathrm{mod} \, p^{\ell-\ell_{1}}, \text{and} \\
& \bar{m}_{1} q_{2}-\bar{m}_{2} q_{1}  \, \equiv \, 0 \, \mathrm{mod} \, q_{1}q_{2}.
\end{align*}
From the second congruence we infer that $q_{1} \mid q_{2}$ and $q_{2} \mid q_{1}$ which implies that $q_{1}=q_{2}=q$, and we also have that $q \mid m_{1}-m_{2}$. Then from the first congruence we immediately see that $\alpha_{1} \, \equiv  \,  \alpha_{2} \, \rm mod \, p^{\ell -\ell_{1}}$. Therefore $\Theta_{\text{zero}}$ is given by
\begin{align*}
N_{0} \sum_{ \substack{1 \leq q \leq Q \\(q,p)=1 }} \frac{\chi(q^{2})}{q^{3}} &\mathop{\mathop{\sum \sum}_{\substack{m_{1},m_{2} \ll M_0 \\ q\mid m_{1}-m_{2} \\ (m_1m_2,p)=1 }}} \chi(\bar{m}_{1} m_{2}) \, \sideset{}{^\star}{\sum }_{ \alpha_{1} \, \rm mod \, p^{\ell -\ell_{1}}} \\
&\times \overline{\chi} \left( 1- \alpha_{1} \bar{m}_{1} p^{r- (\ell-\ell_{1})} \right) \chi \left( 1- \alpha_{1} \bar{m}_{2} p^{r- (\ell-\ell_{1})}\right) \mathfrak{I}_{1}(n,q,m_{i},\varepsilon).
\end{align*}

For $m_{1} \neq m_{2}$, we evaluate the character sum over $\alpha_{1}$. To this end note that we have $\chi(1+zp^{r-(\ell- \ell_{1})}) = e\left(\frac{-A_{1} p^{2 r-2(\ell - \ell_{1})} \, z^{2}-A_{2} p^{r-(\ell-\ell_{1})} \, z}{p^{r}}\right)$, for some integers $A_{1}$ and $A_{2}$ which are coprime to $p$, as our choice of $\ell$ satisfies the condition $r-\ell \geq r/3$ which is same as $\ell \leq 2r/3$  (see \cite[Lemma 13]{mil}). Thus, the  $\alpha_{1}$ sum is given by
\begin{equation} \label{alphasum}
\sideset{}{^\star}{\sum }_{ \alpha \, \rm mod \, p^{\ell -\ell_{1}}} e\left(\frac{Y_{1}\alpha^{2}+Y_{2}\alpha}{p^{r}}\right),
\end{equation}
where
\begin{align*}
&Y_{1} =  A_{1} p^{2 r-2 (\ell - \ell_{1})}  \left(\bar{m}^{2}_{2}- \bar{m}^{2}_{1}\right),  \text{and}\\
&Y_{2}= A_{2} p^{r-(\ell - \ell_{1})} \left( \bar{m}_{1}-\bar{m}_{2}\right).
\end{align*}

Note that this character sum is same as
$$\sum_{\alpha \rm \, mod \, p^{\ell -\ell_{1}}} \,e\left(\frac{A_{1} p^{r-(\ell -\ell_{1})} (\bar{m}_{1}^{2}-\bar{m}_{2}^{2}) \alpha^{2}+ A_{2} (\bar{m}_{1}-\bar{m}_{2})\alpha}{p^{\ell -\ell_{1}}}\right) +O\left(\ell\right) .$$
This sum is quadratic Gauss sum which vanishes unless
$$ p^{\ell - \ell_{1}} \mid (m_{1} -m_{2}).$$
Thus the value of above quadratic Gauss is 
$$p^{\ell - \ell_{1}} \, \mathbb{I}_{m_{1} \, \equiv \, m_{2} \, \rm mod \, p^{\ell-\ell_{1}}}.$$


By substituting the bound for the character sum over $\alpha$  and the bound for the integral $$\mathfrak{I}_{1}(n,q,m_{i},\varepsilon) \ll \frac{p^{2 \ell} q^{2} Q^{2}}{N^{2}} N^{\epsilon}$$
in $\Theta_{\text{zero}}$, we see that
\begin{align*}
\Theta_{\text {zero}} & \ll N_{0} \Big \lbrace \, p^{\ell - \ell_{1}}\sum_{ \substack{1 \leq q \leq Q \\(q,p)=1 }} \frac{1}{q^{3}} \mathop{\mathop{\sum \sum}_{\substack{m_{1},m_{2} \ll M_0 \\ q\mid m_{1}-m_{2} \\ (m_1m_2,p)=1 }}}  \frac{p^{2 \ell} q^{2} Q^{2}}{N^{2}} \mathbb{I}_{m_{1} \, \equiv \, m_{2} \, \rm mod \, p^{\ell-\ell_{1}}} 
\\
& +  \, \sum_{ \substack{1 \leq q \leq Q \\(q,p)=1 }} \frac{1}{q^{3}} \mathop{\mathop{\sum \sum}_{\substack{m_{1},m_{2} \ll M_0 \\ q\mid m_{1}-m_{2} \\ (m_1m_2,p)=1 }}}  \frac{p^{2 \ell} q^{2} Q^{2}}{N^{2}} \Big \rbrace \, N^{\epsilon} \\
& \ll \frac{p^{r+\frac{5\ell}{2} -3\ell_{1}}}{N^{3/2}} N^{\epsilon} +  \frac{p^{2r+\frac{3\ell}{2} -2\ell_{1}}}{N^{5/2} } N^{\epsilon} \\
&\ll \frac{p^{r+\frac{5\ell}{2} -3\ell_{1}}}{N^{3/2}} N^{\epsilon},
\end{align*}
where in the second inequality the first term corresponds to the contribution of $m_{1} = m_{2}$, and second one corresponds to the contribution of $m_{1} \neq m_{2}$, and in the last inequality we have used the assumption that $N \geq p^{r -(\ell -\ell_{1})}$. This concludes the proof the lemma.


\end{proof}
Let $T_{0}(\varepsilon, \ell_{1},N)$ and $S_{f,\chi}(N; r_{1}=0 \, \text{contri}, \Theta_{\text {zero}}) $ denote the contribution of $\Theta_{\text{zero}}$ to $T(\varepsilon, \ell_{1},N)$ and $S_{f,\chi}(N; r_{1}=0 \, \text{contri}) $ respectively. Then we have that 
$$T_{0}(\varepsilon, \ell_{1},N) \ll \frac{p^{\frac{r+3 \ell -4 \ell_{1}}{2}} }{N^{3/4}}  N^{\epsilon}$$
and consequently we have that
\begin{align*}
S_{f,\chi}(N; r_{1}=0 \, \text{contri}, \Theta_{\text {zero}}) \ll \sqrt{N} p^{\ell/2} \, N^{\epsilon},
\end{align*}
provided $N \geq p^{r- \ell }$. We record this as the following proposition.

\begin{proposition} \label{zerocontributiion}
We have 
$$S_{f,\chi}(N; r_{1}=0 \, \text{contri}, \Theta_{\text {zero}}) \ll \sqrt{N} p^{\ell/2} \, N^{\epsilon},$$
\end{proposition}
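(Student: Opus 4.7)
The statement is essentially a bookkeeping consequence of Lemma \ref{zerofrequency} combined with the Cauchy-Schwarz bound \eqref{cauchyappli} and the decomposition of $S_{f,\chi}(N;r_{1}=0 \text{ contri})$ from the Proposition preceding it. My plan is to feed $\Theta_{\text{zero}}$ through the chain of inequalities already set up and simplify the resulting exponents.

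First, using \eqref{cauchyappli} with $\Theta$ replaced by $\Theta_{\text{zero}}$ together with Lemma \ref{zerofrequency}, I would write
\begin{align*}
T_{0}(\varepsilon,\ell_{1},N) \ll N_{0}^{1/4}\,\Theta_{\text{zero}}^{1/2} \ll \bigl(p^{\ell-2\ell_{1}}\bigr)^{1/4}\left(\frac{p^{r+5\ell/2-3\ell_{1}}}{N^{3/2}}\right)^{1/2} N^{\epsilon} \ll \frac{p^{(r+3\ell-4\ell_{1})/2}}{N^{3/4}}\,N^{\epsilon},
\end{align*}
which is the intermediate bound recorded just after Lemma \ref{zerofrequency}. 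Next, substituting into the expression
$$S_{f,\chi}(N;r_{1}=0\text{ contri})=\frac{\tau_{\chi} N^{7/4} 2\pi i^{k}}{Qp^{r+3\ell/2}}\sum_{\varepsilon}\sum_{\ell_{1}=0}^{\ell} p^{\ell_{1}/2}\,T(\varepsilon,\ell_{1},N)$$
and using $|\tau_{\chi}|=p^{r/2}$, the total contribution of the zero frequency is at most
$$\frac{p^{r/2}\,N^{7/4}}{Qp^{r+3\ell/2}}\sum_{\ell_{1}=0}^{\ell}p^{\ell_{1}/2}\cdot\frac{p^{(r+3\ell-4\ell_{1})/2}}{N^{3/4}}\,N^{\epsilon}=\frac{N}{Q}\sum_{\ell_{1}=0}^{\ell}p^{-3\ell_{1}/2}\,N^{\epsilon}.$$

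Finally, I would use $Q=\sqrt{N/p^{\ell}}$ to rewrite $N/Q=\sqrt{N}\,p^{\ell/2}$ and bound the geometric sum $\sum_{\ell_{1}\geq 0}p^{-3\ell_{1}/2}\ll 1$, yielding
$$S_{f,\chi}(N;r_{1}=0\text{ contri},\Theta_{\text{zero}})\ll \sqrt{N}\,p^{\ell/2}\,N^{\epsilon},$$
valid under the standing hypothesis $N\geq p^{r-\ell}$ that was used in Lemma \ref{zerofrequency} (with $\ell_{1}=0$, but the same inequality suffices for every $\ell_{1}\geq 0$ since its right side only decreases). The main thing to watch is that the exponents in $p$ arrange themselves to cancel cleanly: the gain of $p^{\ell_{1}/2}$ against the saving $p^{-2\ell_{1}}$ from $T_{0}$ combines to a convergent geometric series in $\ell_{1}$, which is what makes the summation over $\ell_{1}$ harmless and removes any dependence of the final bound on $\ell_{1}$.
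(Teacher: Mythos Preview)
Your computation is correct and follows exactly the route the paper takes: feed Lemma~\ref{zerofrequency} into \eqref{cauchyappli}, then into the decomposition of $S_{f,\chi}(N;r_1=0\text{ contri})$, and simplify using $|\tau_\chi|=p^{r/2}$ and $Q=\sqrt{N/p^{\ell}}$. The only slip is in your final parenthetical: the hypothesis of Lemma~\ref{zerofrequency} is $N\ge p^{r-(\ell-\ell_1)}=p^{r-\ell+\ell_1}$, whose right side \emph{increases} with $\ell_1$, so $N\ge p^{r-\ell}$ is the weakest case ($\ell_1=0$), not the strongest. The paper itself records only the condition $N\ge p^{r-\ell}$, so your write-up matches the paper; but your stated reason for why that single inequality covers all $\ell_1$ is backwards.
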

provided $N \geq p^{r- \ell }$.

\subsection{Non-zero frequency $n \neq 0$:}
Assume that $n \neq 0$. In this case we determined $\alpha_{2} \, \mathrm{mod} \, p^{\ell}$ and write $m_{1}, m_{2}$ in terms of $q_{1},q_{2}$ and $n$ modulo $q_{1},q_{2}$ respectively using congruences. Indeed, 
$$\alpha_{2} \, \equiv \,  q_{1} \overline{ \left(\bar{\alpha}_{1} q_{2} +n\right)} \, \mathrm{mod} \, p^{\ell-\ell_{1}},$$
and 
$$m_{1} \, \equiv \, - \bar{n} p^{r -(\ell -\ell_{1})} q_{2} \, \mathrm{mod} \, q_{1}, \quad m_{2} \, \equiv \, \bar{n} p^{r-(\ell -\ell_{1})} q_{1}\, \mathrm{mod} \,q_{2}.$$
By writing $m_{1} = - \bar{n} p^{r -(\ell -\ell_{1})} q_{2} +r_{1}q_{1}$ and $m_{2} = \bar{n} p^{r-(\ell -\ell_{1})} q_{1} +r_{2}q_{2}$,  we see that

\begin{align*}
\Theta_{\text{non-zero}} = N_{0} & \,\mathop{\sum \sum }_{\substack{1 \leq q_1, q_2 \leq Q \\ (q_1q_2,p)=1} } \frac{\chi\left(q_{1}q_{2}\right)}{q_{1}^{3/2} q_{2}^{3/2}}\, \sum_{ \substack{0< |n| \leq \frac{q_{1}q_{2}p^{\ell -\ell_{1}}}{N_{0}} N^{\epsilon} }}  \,  \mathop{\sum \sum}_{\substack{|r_{1}| \leq \frac{p^{r}}{N}N^{\epsilon} \\ |r_{2}| \leq \frac{p^{r}}{N}N^{\epsilon}}} \mathcal{C}(r_{1},r_{2},q_{1},q_{2},n) \,  \mathfrak{I}_{1}(n,q_{i},m_{i},\varepsilon),
\end{align*}
where $\mathcal{C}(...)$ is given by
$$ \sideset{}{^\star}{\sum}_{\alpha \, \mathrm{mod} \, p^{(\ell -\ell_{1})}} \overline{\chi} \left( - \bar{n} p^{r -(\ell -\ell_{1})} q_{2} +r_{1}q_{1} - \alpha p^{r- (\ell -\ell_{1})} \right) \, \chi \left( \bar{n} p^{r-(\ell -\ell_{1})} q_{1} +r_{2}q_{2}-q_{1} \overline{ \left(\bar{\alpha} q_{2} +n\right)}  p^{r- (\ell -\ell_{1})}\right).$$

\subsection{Evaluation of sum over $\alpha$}
The $\alpha$ sum  is given by
$$\mathcal{C}(...) = \sideset{}{^\star}{\sum}_{ \alpha \, \mathrm{md} \, p^{(\ell - \ell_{1})}} \, \overline{\chi} \left( r_{1}q_{1} + \left(- \alpha - \bar{n}  q_{2}\right)p^{r- (\ell - \ell_{1})} \right) \chi \left( r_{2}q_{2}+ \left(-q_{1} \overline{ \left(\bar{\alpha} q_{2} +n\right)} + \bar{n} q_{1} \right) p^{r- (\ell - \ell_{1})}\right). $$
Note that $\chi(1+zp^{r-(\ell - \ell_{1})}) = e\left(\frac{-A_{1} p^{2 r-2(\ell - \ell_{1})} \, z^{2}-A_{2} p^{r-(\ell - \ell_{1})} \, z}{p^{r}}\right)$ for some integers $A_{i}$'s which are coprime to $p$, as our choice of $\ell$ satisfies the condition $r-\ell \geq r/3$ which is same as $\ell \leq 2r/3$. Thus, the character sum $\mathcal{C}(...)$ is same as
\begin{align*}
& \overline{\chi}(r_{1}q_{1}) \, \chi(r_{2}q_{2}) \, \,  e\left(\frac{A_{1}\overline{n} \left(\overline{r_{1}q_{1}} \, q_{2}+ \overline{r_{2}q_{2}} \, q_{1}\right)+\left(-A_{2} \bar{n}^{2} \bar{q}_{1}^{2} \bar{r}_{1}^{2} q_{2}^2 +A_{2} \bar{n}^{2} \bar{r}_{2}^{2} \bar{q}_{2}^{2} q_{1}^{2} \right)p^{r-(\ell - \ell_{1})}}{p^{\ell - \ell_{1}}}\right) \\
& \sideset{}{^\star}{\sum}_{\substack{ \alpha \, \mathrm{mod} \, p^{\ell - \ell_{1}} \\ \bar{\alpha} q_{2}+n \, \not \equiv \, 0 \, \mathrm{mod} \, p^{\ell - \ell_{1}}}} e\left(\frac{\left(A_{1} \overline{r_{1}q_{1}} -2A_{2} \bar{n} \bar{r}_{1}^{2} \bar{q}_{1}^{2} q_{2} p^{r -(\ell - \ell_{1})}\right) \alpha - \left(A_{1} \overline{r_{2}q_{2}} q_{1}+2A_{2} \bar{n} \bar{r}_{2}^{2} \bar{q}_{2}^{2} q_{1}^{2} p^{r-(\ell - \ell_{1})}\right)\overline{ \left(\bar{\alpha} q_{2} +n\right)} }{p^{\ell - \ell_{1}}}\right) \\
&  \hspace{2.5cm} \times   e\left(\frac{- A_{2} \bar{r}_{1}^{2} \bar{q}_{1}^{2} p^{r-(\ell - \ell_{1})} \alpha^{2} + A_{2} \bar{r}_{2}^{2}\bar{q}_{2}^{2} q_{1}^{2} p^{r-(\ell - \ell_{1})} \overline{ \left(\bar{\alpha} q_{2} +n\right)}^{2}}{p^{\ell - \ell_{1}}}\right).
\end{align*}
The above $\alpha$ sum is equals to
\begin{align*}
\sideset{}{^\star}{\sum}_{\substack{ \alpha \, \mathrm{mod} \, p^{\ell - \ell_{1}} \\ \alpha+ 1 \, \not \equiv \, 0 \, \mathrm{mod} \, p^{\ell - \ell_{1}}}} \,& e\left(\frac{\left(A_{1} \bar{n} \overline{r_{1}q_{1}} -2A_{2} \bar{n}^{2} \bar{r}_{1}^{2} \bar{q}_{1}^{2} q_{2} p^{r -(\ell - \ell_{1})}\right) \bar{\alpha} - \left(A_{1} \bar{n}\overline{r_{2}q_{2}} q_{1}+2A_{2} \bar{n}^{2} \bar{r}_{2}^{2} \bar{q}_{2}^{2} q_{1}^{2} p^{r-(\ell - \ell_{1})}\right)\overline{ \left(\alpha+1\right)} }{p^{\ell - \ell_{1}}}\right) \\
& \times   e\left(\frac{- A_{2} \bar{n}^{2}\bar{r}_{1}^{2} \bar{q}_{1}^{2} p^{r-(\ell - \ell_{1})} \bar{\alpha}^{2} + A_{2} \bar{n}^{2} \bar{r}_{2}^{2}\bar{q}_{2}^{2} q_{1}^{2} p^{r-(\ell - \ell_{1})} \overline{ \left(\alpha+1\right)}^{2}}{p^{\ell - \ell_{1}}}\right).
\end{align*}
We assume that $(\ell - \ell_{1})$ is an even positive integer to make exposition simpler and to keep ideas clear. We now evaluate the above sum by splitting the $\alpha$ variable.  Indeed, we write 
$$\alpha = \alpha_{1}+ \, \alpha_{2} \, p^{(\ell - \ell_{1}) /2}, \, \, \text{with} \, \, \quad \alpha_{1} \left(\neq 0, \, \neq -1\right) \, \mathrm{mod} \, p^{(\ell - \ell_{1}) /2}, \, \, \, \, \, \alpha_{2} \, \mathrm{mod} \, p^{(\ell - \ell_{1}) /2}.$$
Thus the  $\alpha$ sum can be written as
\begin{align*}
&\sideset{}{^\star}{\sum}_{\substack{\alpha_{1} \, \mathrm{mod} \, p^{(\ell - \ell_{1}) /2} \\ \alpha_{1} +1 \, \not \equiv \, 0 \, \mathrm{mod} \,  p^{(\ell - \ell_{1}) /2}}} e\left(\frac{X_{1} \, \bar{\alpha_{1}} + X_{2} \, \overline{\left(\alpha_{1}+1\right)} + X_{3} \, \bar{\alpha_{1}}^{2}+  X_{4} \, \overline{\left(\alpha_{1}+1\right)}^{2}}{p^{\ell -\ell_{1}}}\right) \\
& \hspace{.5cm} \times \sum_{\alpha_{2} \, \mathrm{mod} \, p^{(\ell -\ell_{1}) /2}} e\left(-\frac{\left(X_{1} \, \bar{\alpha}^{2}_{1} + X_{2} \, \overline{\left(\alpha_{1}+1\right)}^{2} \right) \alpha_{2}}{p^{(\ell - \ell_{1}) /2}}\right) \\
&= p^{(\ell -\ell_{1}) /2} \, \sideset{}{^\star}{\sum}_{\substack{\alpha_{1} \, \mathrm{mod} \, p^{(\ell -\ell_{1}) /2} \\ \alpha_{1} +1 \, \not \equiv \, 0 \, \mathrm{mod} \,  p^{(\ell -\ell_{1}) /2} \\ X_{1} \, \bar{\alpha}^{2}_{1} + X_{2} \, \overline{\left(\alpha_{1}+1\right)}^{2} \, \equiv 0 \, \mathrm{mod} \, p^{(\ell -\ell_{1})/2}}} \, e\left(\frac{X_{1} \, \bar{\alpha_{1}} + X_{2} \, \overline{\left(\alpha_{1}+1\right)} + X_{3} \, \bar{\alpha_{1}}^{2}+  X_{4} \, \overline{\left(\alpha_{1}+1\right)}^{2}}{p^{\ell -\ell_{1}}}\right),
\end{align*}
where
$$X_{1}= A_{1} \bar{n} \overline{r_{1}q_{1}} -2A_{2} \bar{n}^{2} \bar{r}_{1}^{2} \bar{q}_{1}^{2} q_{2} p^{r -(\ell -\ell_{1})},$$
$$X_{2}= - \left(A_{1} \bar{n}\overline{r_{2}q_{2}} q_{1}+2A_{2} \bar{n}^{2} \bar{r}_{2}^{2} \bar{q}_{2}^{2} q_{1}^{2} p^{r-(\ell -\ell_{1})}\right),$$
$$X_{3}= - A_{2} \bar{r}_{1}^{2} \bar{q}_{1}^{2} p^{r-(\ell -\ell_{1})},$$
and
$$X_{4} = A_{2} \bar{n}^{2} \bar{r}_{2}^{2}\bar{q}_{2}^{2} q_{1}^{2} p^{r-(\ell -\ell_{1})}.$$
Note that $X_{1} \, \equiv \, A_{1} \bar{n} \overline{r_{1}q_{1}} \, \mathrm{mod} \,\,  p^{(\ell -\ell_{1})/2}$, and $X_{2} \, \equiv \, - A_{1} \bar{n}\overline{r_{2}q_{2}} q_{1} \, \mathrm{mod} \, p^{(\ell -\ell_{1})/2}$ as $r-(\ell -\ell_{1})\geq (\ell -\ell_{1})/2$.

Thus this $\alpha$ sum is given by

\begin{align*}
p^{(\ell -\ell_{1}) /2} \, & e\left(\frac{\left( A_{1} \bar{n} \overline{r_{1}q_{1}} -2A_{2} \bar{n}^{2} \bar{r}_{1}^{2} \bar{q}_{1}^{2} q_{2} p^{r -(\ell-\ell_{1})}\right) \left(\left(r_{2} \bar{r}_{1}\right)^{1/2}-1\right)}{p^{\ell -\ell_{1}}}\right) \\ & \times e\left(-\frac{ \left( A_{1} \bar{n}\overline{r_{2}q_{2}} q_{1}+2A_{2} \bar{n}^{2} \bar{r}_{2}^{2} \bar{q}_{2}^{2} q_{1}^{2} p^{r-(\ell-\ell_{1})}\right) \left(1- \overline{\left(r_{2}\bar{r}_{1}\right)^{1/2}}\right)}{p^{\ell-\ell_{1}}}\right)\\
&\times  e\left(\frac{- A_{2} \bar{r}_{1}^{2} \bar{q}_{1}^{2} p^{r-(\ell-\ell_{1})} \left(\left(r_{2} \bar{r}_{1}\right)^{1/2}-1\right)^{2} - A_{2} \bar{r}_{1}^{2} \bar{q}_{1}^{2} p^{r-(\ell-\ell_{1})} \left(1- \overline{\left(r_{2}\bar{r}_{1}\right)^{1/2}}\right)^{2} }{p^{\ell-\ell_{1}}}\right),
\end{align*}
if $r_{2}\bar{r}_{1} \, \equiv \, \square \, \mathrm{mod} \, p^{(\ell-\ell_{1})/2}$, other wise this $\alpha $ sum is zero. Note that $r_{2}\bar{r}_{1}$ is square modulo $p^{(\ell-\ell_{1})/2}$ if and only if $r_{2}\bar{r}_{1}$ is square modulo $p$. Any $m $ modulo $p$ be such that $r_{2}\bar{r}_{1} \, \equiv \, m^{2} \, \mathrm{mod} \, p$ can be uniquely extended to modulo $p^{(\ell-\ell_{1})/2}$ with the property that $r_{2}\bar{r}_{1} \, \equiv \, m^{2} \, \mathrm{mod} \, p^{(\ell-\ell_{1})/2}$.

Therefore, we have
\begin{align*}
\mathcal{C}(...) &=p^{(\ell-\ell_{1}) /2} \, \,  \mathbb{I}_{r_{2}\bar{r}_{1} \, \equiv \, \square \, \mathrm{mod} \, p} \, \, \overline{\chi}(r_{1}q_{1}) \, \chi(r_{2}q_{2}) \, \,  e\left(\frac{A_{1}\overline{n} \left(\overline{r_{1}q_{1}} \, q_{2}+ \overline{r_{2}q_{2}} \, q_{1}\right)}{p^{\ell - \ell_{1}}}\right) \\
&\times e\left(\frac{\left(-A_{2} \bar{n}^{2} \bar{q}_{1}^{2} \bar{r}_{1}^{2} q_{2}^2 +A_{2} \bar{n}^{2} \bar{r}_{2}^{2} \bar{q}_{2}^{2} q_{1}^{2} \right)p^{r-(\ell - \ell_{1})}}{p^{\ell -\ell_{1}}}\right) \\
& \times e\left( \frac{\left( A_{1} \bar{n} \overline{r_{1}q_{1}} -2A_{2} \bar{n}^{2} \bar{r}_{1}^{2} \bar{q}_{1}^{2} q_{2} p^{r -(\ell-\ell_{1})}\right) \left(\left(r_{2} \bar{r}_{1}\right)^{1/2}-1\right)}{p^{\ell -\ell_{1}}}\right)\\
&\times e\left(-\frac{ \left( A_{1} \bar{n}\overline{r_{2}q_{2}} q_{1}+2A_{2} \bar{n}^{2} \bar{r}_{2}^{2} \bar{q}_{2}^{2} q_{1}^{2} p^{r-(\ell-\ell_{1})}\right) \left(1- \overline{\left(r_{2}\bar{r}_{1}\right)^{1/2}}\right)}{p^{\ell-\ell_{1}}}\right)\\
&\times  e\left(\frac{- A_{2} \bar{r}_{1}^{2} \bar{q}_{1}^{2} p^{r-(\ell-\ell_{1})} \left(\left(r_{2} \bar{r}_{1}\right)^{1/2}-1\right)^{2} - A_{2} \bar{r}_{1}^{2} \bar{q}_{1}^{2} p^{r-(\ell-\ell_{1})} \left(1- \overline{\left(r_{2}\bar{r}_{1}\right)^{1/2}}\right)^{2} }{p^{\ell-\ell_{1}}}\right).
\end{align*}

\subsection{The sum over $r_{2}$}
We now consider the $r_{2}$ sum which is given by
$$\Delta(n, q_{i}, r_{1},N, \varepsilon) = \sum_{\substack{ |r_{2}| \leq p^{r}/N \\ r_{2} \bar{r}_{1} \, \equiv \, \square \, \mathrm{mod} \, p}} \,  \chi(r_{2}) \, e\left(\frac{g(r_{2})}{p^{\ell -\ell_{1}}}\right) \, \mathfrak{I}_{1}(n,q_{i},r_{1},r_{2},\varepsilon),$$
where
\begin{align*}
g(r_{2})= &A_{1} \bar{n}  \bar{q_{2}} q_{1} \bar{r}_{2} + A_{2} \bar{n}^{2}  \bar{q}_{2}^{2} q_{1}^{2} p^{r-(\ell-\ell_{1})} \bar{r}_{2}^{2}+\left( A_{1} \bar{n} \overline{r_{1}q_{1}} -2A_{2} \bar{n}^{2} \bar{r}_{1}^{2} \bar{q}_{1}^{2} q_{2} p^{r -(\ell-\ell_{1})}\right) \left(r_{2} \bar{r}_{1}\right)^{1/2} \\
&-\left( A_{1} \bar{n}\overline{r_{2}q_{2}} q_{1}+2A_{2} \bar{n}^{2} \bar{r}_{2}^{2} \bar{q}_{2}^{2} q_{1}^{2} p^{r-(\ell-\ell_{1})}\right) \left(1- \overline{\left(r_{2}\bar{r}_{1}\right)^{1/2}}\right)- A_{2} \bar{r}_{1}^{2} \bar{q}_{1}^{2} p^{r-(\ell-\ell_{1})} \left(r_{2} \bar{r}_{1}-2 (r_{2} \bar{r}_{1})^{1/2}\right) \\
& - A_{2} \bar{r}_{1}^{2} \bar{q}_{1}^{2} p^{r-(\ell-\ell_{1})} \left( r_{1} \bar{r}_{2}-2 (r_{2}\bar{r}_{1})^{1/2} \right).
\end{align*}

By taking dyadic sub division we see that this sum is at most
$$\Delta(n, q_{i}, r_{1},N, \varepsilon) \ll N^{\epsilon} \sup_{R \leq \frac{p^{r}}{N}} |T(R)|,$$
where 
$$T(R) = \sum_{\substack{  R \leq r_{2} \leq 2R \\ r_{2} \bar{r}_{1} \, \equiv \, \square \, \mathrm{mod} \, p}} \,  \chi(r_{2}) \, e\left(\frac{g(r_{2})}{p^{\ell-\ell_{1}}}\right) \, \mathfrak{I}_{1}(n,q_{i},r_{1},r_{2},\varepsilon).$$

%
%
\begin{remark}
Note that we have 
$$\frac{\partial}{\partial r_{2}} \mathfrak{I}_{1}(n,q_{i},r_{1},r_{2},\varepsilon) \ll  \frac{N}{p^{r}} \, \frac{p^{2 \ell} q^{2} Q^{2}}{N^{2}} N^{\epsilon},$$
so we can ignore the integral $\mathfrak{I}_{1}(...)$ while estimating $T(R)$. 
\end{remark}

In the following lemma we give estimate for $T(R)$.
\begin{lemma}
We have
$$T(R) \ll p^{14/15} \, p^{r/30} \, R^{1/5} N^{\epsilon},$$
for any $\epsilon>0$.
\end{lemma}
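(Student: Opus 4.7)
The plan is to peel off the smooth and multiplicative weights in $T(R)$ and then invoke Mili\'cevi\'c's $p$-adic exponent pair $(1/30,13/15)$ from \cite{mil} on the resulting pure exponential sum.

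First I would use partial summation to remove the $r_{2}$-dependent weight $\mathfrak{I}_{1}(n,q_{i},r_{1},r_{2},\varepsilon)$ from the sum. The Remark preceding the lemma provides
$$\partial_{r_{2}}\mathfrak{I}_{1}\;\ll\;\frac{N}{p^{r}}\,\|\mathfrak{I}_{1}\|_{\infty}\,N^{\epsilon},$$
and since $R\leq p^{r}/N$ the total variation of $\mathfrak{I}_{1}$ over $[R,2R]$ is $\ll\|\mathfrak{I}_{1}\|_{\infty}N^{\epsilon}$. Partial summation therefore reduces the estimation to the unweighted sum with the same phase. The quadratic-residue restriction $r_{2}\bar r_{1}\equiv\square\!\!\pmod{p}$ can be detected by $\tfrac{1}{2}(1+\chi_{2}(r_{2}\bar r_{1}))$ with $\chi_{2}$ the Legendre symbol modulo $p$; splitting off the two pieces and absorbing $\chi_{2}$ into $\chi$ reduces matters to bounding
$$\tilde T(R)\;=\;\sum_{R\le r_{2}\le 2R}\psi(r_{2})\,e\!\left(\frac{g(r_{2})}{p^{\ell-\ell_{1}}}\right),$$
where $\psi\in\{\chi,\chi\chi_{2}\}$.

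The next step is to package the multiplicative character and the additive phase into a single $p$-adic exponential of modulus $p^{r}$. Because $\psi$ is (essentially) primitive of conductor $p^{r}$ with $p$ odd, the $p$-adic logarithm yields $\psi(r_{2})=e(h(r_{2})/p^{r})$ on each residue class of $r_{2}$ modulo $p$, with $h'$ a $p$-adic unit. The square-class restriction makes $(r_{2}\bar r_{1})^{1/2}$ into an honest $p$-adic analytic function of $r_{2}$, so that
$$\tilde T(R)\;=\;\sum_{r_{2}\sim R}e\!\left(\frac{F(r_{2})}{p^{r}}\right),\qquad F(r_{2})\;=\;h(r_{2})+p^{r-(\ell-\ell_{1})}\,g(r_{2}).$$

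With the sum in standard $p$-adic van der Corput shape I would invoke the exponent pair $(k,l)=(1/30,13/15)$ from \cite{mil}, obtained from three iterations of the $A$-process applied to the trivial pair $(1/2,1/2)$. The hypothesis $p>5$ is precisely what is required for Mili\'cevi\'c's three iterated $A$-processes to remain valid in the $p$-adic setting. Feeding $F$ into the corresponding bound delivers the announced $p^{r/30}R^{1/5}$, with the $p^{14/15}$ factor emerging from the intrinsic $p$-adic losses incurred by the iterated $A$-process (and from the boundary terms created at each $A$-step) while $N^{\epsilon}$ absorbs logarithmic overhead from partial summation and from the $\chi_{2}$-split.

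The main obstacle is the admissibility check for the phase $F$: one must verify that the first several derivatives $F^{(j)}$ have the correct $p$-adic valuations on $[R,2R]$, in particular that the leading piece, which combines the $p$-adic unit $h'$ with the $\bar r_{2}$ and $(r_{2}\bar r_{1})^{1/2}$ terms of $g$, does not suffer an unexpected $p$-adic cancellation. This is a bookkeeping computation using the explicit form of $g(r_{2})$ together with the $p$-adic logarithm series of $\chi$, and is where the technical weight of the proof lies.
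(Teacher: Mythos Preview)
Your overall strategy---strip the smooth weight by partial summation, convert the multiplicative character into an additive phase via the $p$-adic logarithm, and then invoke Mili\'cevi\'c's exponent pair $(1/30,13/15)$---is exactly the paper's route. There is, however, a genuine slip in the reduction step: detecting the constraint $r_{2}\bar r_{1}\equiv\square\pmod p$ by $\tfrac12(1+\chi_{2})$ cannot work here, because the phase $g(r_{2})$ already contains the branch $(r_{2}\bar r_{1})^{1/2}$, which has no meaning once you extend the summation to non-residues. The sum $\tilde T(R)$ you write down, ranging over \emph{all} $r_{2}\in[R,2R]$ with that phase, is simply not defined.

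The paper sidesteps this by parametrizing the squareness directly: it fixes a large integer $\kappa$, writes $r_{2}\equiv r_{1}m^{2}\pmod{p^{\kappa}}$, sums over $m$ (with the factor $\tfrac12$ for the two roots), and substitutes $r_{2}=r_{1}m^{2}+tp^{\kappa}$. This single move (i) makes the square root an honest $p$-adic analytic function of $t$ via Hensel, (ii) factors $\chi(r_{2})=\chi(r_{1}m^{2})\,\chi(1+p^{\kappa}\bar r_{1}\bar m^{2}t)$ so that the $p$-adic logarithm applies cleanly to the second factor, and (iii) lands the combined phase $f(t)=a_{0}\log_{p}(1+p^{\kappa}\bar r_{1}\bar m^{2}t)+p^{r-(\ell-\ell_{1})}g(r_{1}m^{2}+tp^{\kappa})$ in Mili\'cevi\'c's class ${\bf F}(\kappa,1,\kappa,\lambda,u)$ with the parameters required for the exponent-pair machinery. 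Your residue-class-mod-$p$ idea is morally the $\kappa=1$ case of this, and your final paragraph correctly identifies the admissibility verification as the technical crux; but you should replace the Legendre-symbol split by the $m$-parametrization, and be aware that taking $\kappa$ large (not $1$) is what the paper uses to certify membership in ${\bf F}$.
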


\begin{proof}
Let $\kappa$ be a large positive integer but fixed. Then we have that
\begin{align*}
T(R) &= \frac{1}{2} \sum_{1 \leq m \leq p^{\kappa}} \, \sum_{ \substack{ R \leq r_{2} \leq 2R \\ r_{2} \, \equiv \, r_{1} \, m^{2} \, \mathrm{mod} \, p^{\kappa}}} \chi\left( r_{2} \right) \, e\left(\frac{g(r_{2})}{p^{\ell-\ell_{1}}}\right) \\
&= \frac{1}{2} \sum_{1 \leq m \leq p^{\kappa}} \chi(r_{1}m^{2}) \, \sum_{\frac{R-r_{1}m^{2}}{p^{\kappa}} \leq t \leq \frac{2R-r_{1}m^{2}}{p^{\kappa}}} \, \chi \left(1+\bar{r}_{1} \bar{m}^{2} p t\right) \, e\left(\frac{g(r_{1}m^{2}+tp)}{p^{\ell-\ell_{1}}}\right) \\
&=\frac{1}{2} \sum_{1 \leq m \leq p^{\kappa}} \chi(r_{1}m^{2}) \ \sum_{\frac{R-r_{1}m^{2}}{p^{\kappa}} \leq t \leq \frac{2R-r_{1}m^{2}}{p^{\kappa}}} e\left(\frac{f(t)}{p^{r}}\right),
\end{align*}
where $f(t) = a_{0} \log_{p} \left(1+p^{\kappa}r_{1}\bar{m}^{2} t \right) + p^{r-(\ell-\ell_{1})} g(r_{1}m^{2}+tp)$. Note that 
$$f^{\prime}(t) = p^{\kappa} \, a_{0}r_{1}\bar{m}^{2} \left(1+p^{\kappa}r_{1}\bar{m}^{2} t\right)^{-1}+ p^{r-(\ell-\ell_{1})} \, h(t),$$
where $h(t)=p \, g^{\prime}(r_{1}m^{2}+pt)$. Our phase function $f$ is in the class ${\bf F}(\kappa, 1,\kappa, \lambda, u)$ for arbitrarily large positive $\lambda$ and positive integer $u$ but fixed (See \cite[Section 3]{mil}, and page number 871 of \cite{mil}) so that we can apply $p$-adic exponent pair $(1/30,13/15)$, when $p \neq 2, 3, 5$,   to the above inner sum to get 
\begin{align*}
T(R) &\ll_{p}  \left(\frac{p^{r-2 \kappa}}{R}\right)^{1/30} \, R^{13/15} N^{\epsilon} \\
&\ll_{p}  \, p^{\frac{r}{30}} \, R^{1/5} N^{\epsilon},
\end{align*}
where absolute constant depends on prime $p$.
Which concludes the lemma.
\end{proof}

The consequence of the above lemma we have
\begin{equation} \label{Delta}
\Delta(n, q_{i}, r_{1},N, \varepsilon) \ll \, p^{\frac{13r}{15}} \, N^{-5/6} N^{\epsilon}.
\end{equation}

In the following lemma we estimate $\Theta_{\text{non-zero}}$.
\begin{lemma}
We have 
$$\Theta_{\text{non-zero}} \ll \frac{ p^{\frac{28r}{15}+\ell -\frac{3\ell_{1}}{2}}}{N^{4/3}} N^{\epsilon}.$$
\end{lemma}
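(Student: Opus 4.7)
The plan is to reduce the bound for $\Theta_{\text{non-zero}}$ to a routine counting argument, relying on the two substantial inputs already prepared above: the explicit evaluation of the character sum $\mathcal{C}(r_{1},r_{2},q_{1},q_{2},n)$, which exhibits a uniform modulus of $p^{(\ell-\ell_{1})/2}$, and the estimate $\Delta(n,q_{i},r_{1},N,\varepsilon) \ll p^{13r/15} N^{-5/6} N^{\epsilon}$ from \eqref{Delta} for the inner sum over $r_{2}$.

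First I would take absolute values in every summation except the one over $r_{2}$. In the closed form of $\mathcal{C}(\ldots)$, the only $r_{2}$-dependent factors are $\chi(r_{2})$ and the exponential carrying $g(r_{2})$, and both have been repackaged (together with $\mathfrak{I}_{1}$) into $\Delta$. Hence the inner double sum over $r_{2}$ and $\mathfrak{I}_{1}$ is majorized in modulus by $p^{(\ell-\ell_{1})/2}\,|\Delta|$, which by \eqref{Delta} does not exceed $p^{(\ell-\ell_{1})/2} \, p^{13r/15} \, N^{-5/6} \, N^{\epsilon}$.

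It then remains to sum trivially over the outer variables. The admissible ranges are $|r_{1}| \ll p^{r}/N \cdot N^{\epsilon}$, giving a factor $p^{r}/N$; $|n| \ll q_{1}q_{2}p^{\ell-\ell_{1}}/N_{0} \cdot N^{\epsilon}$, giving a factor $q_{1}q_{2}p^{\ell-\ell_{1}}/N_{0}$; and $q_{1},q_{2} \leq Q$, where the combination with $q_{1}q_{2}$ from the $n$-range yields $\sum\sum (q_{1}q_{2})^{-1/2} \ll Q$. With $Q = \sqrt{N/p^{\ell}}$, the assembled estimate becomes
$$\Theta_{\text{non-zero}} \ll N_{0} \cdot p^{(\ell-\ell_{1})/2} \cdot \frac{p^{13r/15}}{N^{5/6}} \cdot \frac{p^{r}}{N} \cdot \frac{p^{\ell-\ell_{1}}}{N_{0}} \cdot Q \cdot N^{\epsilon},$$
and collecting the powers of $p$ (namely $\tfrac{\ell-\ell_{1}}{2} + \tfrac{13r}{15} + r + (\ell-\ell_{1}) - \tfrac{\ell}{2} = \tfrac{28r}{15} + \ell - \tfrac{3\ell_{1}}{2}$) together with the powers of $N$ (namely $-\tfrac{5}{6} - 1 + \tfrac{1}{2} = -\tfrac{4}{3}$) gives the claimed bound.

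The main obstacle is arithmetic bookkeeping rather than analysis: in particular, one must verify that the prefactor $N_{0}$ cancels exactly against the $1/N_{0}$ coming from the $n$-range, so that the final bound is independent of the auxiliary parameter. Beyond that, no further analytic input is required — the heavy lifting has already been done in the character-sum evaluation and in the $p$-adic exponent pair estimate for $T(R)$.
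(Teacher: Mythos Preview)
Your proposal is correct and follows essentially the same route as the paper: take absolute values outside the $r_{2}$-sum, replace $\mathcal{C}$ by its modulus $p^{(\ell-\ell_{1})/2}$, insert the bound \eqref{Delta} for $\Delta$, and sum trivially over $r_{1}$, $n$, $q_{1}$, $q_{2}$; the only cosmetic difference is that the paper first records the intermediate inequality $\Theta_{\text{non-zero}} \ll p^{r+\ell-3\ell_{1}/2} N^{-1/2}\sup|\Delta|\,N^{\epsilon}$ before substituting \eqref{Delta}. Your power-counting in $p$ and $N$ is accurate.
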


\begin{proof}
We have
$$\Theta_{\text{non-zero}} \ll \frac{p^{r+\ell -\frac{3\ell_{1}}{2}}}{\sqrt{N}} \sup_{\substack{q_{i} \leq Q \\ |r_{1}| \leq \frac{p^{r}}{N} \\ 0 < |n| \leq \frac{q_{1}q_{2} p^{\ell -\ell_{1}}} {N_{0}}}} |\Delta(n, q_{i}, r_{1},N, \varepsilon)| \, N^{\epsilon}.$$
By substituting the bound for $\Delta(...)$ from equation \eqref{Delta} in the above inequality we get
$$\Theta_{\text{non-zero}} \ll \frac{ p^{\frac{28r}{15}+\ell -\frac{3\ell_{1}}{2}}}{N^{4/3}} N^{\epsilon}.$$
\end{proof}

Let $T_{\neq 0}(\varepsilon, \ell_{1},N)$ and $S_{f,\chi}(N; r_{1}=0 \, \text{contri}, \Theta_{\text {non-zero}}) $ denote the contribution of $\Theta_{\text{non-zero}}$ to $T(\varepsilon, \ell_{1},N)$ and $S_{f,\chi}(N; r_{1}=0 \, \text{contri}) $ respectively. Then we have that 
$$T_{\neq 0}(\varepsilon, \ell_{1},N) \ll \frac{ p^{14r/15} p^{\frac{3 \ell -5\ell_{1}}{4}}}{N^{2/3}} N^{\epsilon}$$
and consequently we have that
\begin{align*}
S_{f,\chi}(N; r_{1}=0 \, \text{contri}, \Theta_{\text {non-zero}}) \ll \frac{ p^{13r/30} N^{7/12}}{p^{\ell /4}}\, N^{\epsilon}.
\end{align*}
Thus we have the following proposition.

\begin{proposition} \label{nonzerocontributiion}
We have 
$$S_{f,\chi}(N; r_{1}=0 \, \text{contri}, \Theta_{\text {non-zero}}) \ll \frac{ p^{13r/30} N^{7/12}}{p^{\ell /4}}\, N^{\epsilon}.$$
\end{proposition}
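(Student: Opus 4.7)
The plan is to assemble the pieces already established. Starting from the bound on $\Theta_{\text{non-zero}}$ just proved, I will apply the Cauchy--Schwarz inequality \eqref{cauchyappli} to extract a bound on the non-zero frequency contribution $T_{\neq 0}(\varepsilon, \ell_{1}, N)$, then substitute back into the closed-form expression for $S_{f,\chi}(N; r_{1}=0 \, \text{contri})$ given in the proposition following Lemma \ref{applicavoronoi}, and finally sum over $\ell_{1}$.

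First, applying \eqref{cauchyappli} with $N_0^{1/4} = p^{(\ell - 2\ell_1)/4} N^\epsilon$ and the bound $\Theta_{\text{non-zero}}^{1/2} \ll p^{14r/15 + \ell/2 - 3\ell_1/4} N^{-2/3} N^\epsilon$, the exponents of $p$ combine as $(\ell - 2\ell_1)/4 + \ell/2 - 3\ell_1/4 = (3\ell - 5\ell_1)/4$, yielding
$$T_{\neq 0}(\varepsilon, \ell_{1}, N) \ll \frac{p^{14r/15} \, p^{(3\ell - 5\ell_1)/4}}{N^{2/3}} \, N^\epsilon.$$

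Second, I substitute into
$$S_{f,\chi}(N; r_1=0, \Theta_{\text{non-zero}}) \ll \frac{|\tau_\chi| \, N^{7/4}}{Q \, p^{r+3\ell/2}} \sum_{\ell_{1}=0}^{\ell} p^{\ell_1/2} \, |T_{\neq 0}(\varepsilon, \ell_1, N)|,$$
using $|\tau_\chi| = p^{r/2}$ and $Q = \sqrt{N/p^\ell}$, so that the prefactor simplifies to $N^{5/4}/p^{r/2 + \ell}$. This gives
$$S_{f,\chi}(N; r_1=0, \Theta_{\text{non-zero}}) \ll \frac{N^{5/4}}{p^{r/2+\ell}} \cdot \frac{p^{14r/15} \, p^{3\ell/4}}{N^{2/3}} \, \Bigl(\sum_{\ell_{1}=0}^{\ell} p^{\ell_1/2 - 5\ell_1/4}\Bigr) N^\epsilon.$$

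Third, the inner sum over $\ell_1$ is the geometric series $\sum_{\ell_1 \geq 0} p^{-3\ell_1/4} = O(1)$, and the remaining exponents collapse neatly: $N^{5/4 - 2/3} = N^{7/12}$, $p^{14r/15 - r/2} = p^{13r/30}$, and $p^{3\ell/4 - \ell} = p^{-\ell/4}$. Combining these yields exactly the claimed bound
$$S_{f,\chi}(N; r_{1}=0 \, \text{contri}, \Theta_{\text {non-zero}}) \ll \frac{p^{13r/30} \, N^{7/12}}{p^{\ell/4}} \, N^\epsilon.$$

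Since all non-trivial inputs---namely Cauchy--Schwarz, the Poisson step leading to $\Theta_{\text{non-zero}}$, and the $p$-adic exponent pair $(1/30, 13/15)$ applied to the $r_2$-sum to obtain \eqref{Delta}---have been dealt with already, this step is essentially bookkeeping and poses no obstacle. The only minor consistency check is that the exponent $1/2 - 5/4 = -3/4$ controlling the $\ell_1$-sum is negative so that the geometric series converges uniformly in $\ell$; this indeed holds.
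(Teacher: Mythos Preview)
Your proposal is correct and follows exactly the same route as the paper: you derive the bound on $T_{\neq 0}(\varepsilon,\ell_1,N)$ from \eqref{cauchyappli} and the lemma bounding $\Theta_{\text{non-zero}}$, substitute into the formula for $S_{f,\chi}(N;r_1=0\ \text{contri})$, and sum the geometric series in $\ell_1$. The paper merely states the intermediate bound $T_{\neq 0}(\varepsilon,\ell_1,N)\ll p^{14r/15}p^{(3\ell-5\ell_1)/4}N^{-2/3}N^\epsilon$ and the final result without writing out the arithmetic, which you have carried out in full.
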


\section{conclusion}
In this section we complete the proof of our main Theorem \ref{mainth}. We have 
$$S_{f,\chi}(N; r_{1}=0 \, \text{contri}) = S_{f,\chi}(N; r_{1}=0 \, \text{contri}, \Theta_{\text {zero}}) + S_{f,\chi}(N; r_{1}=0 \, \text{contri}, \Theta_{\text {non-zero}}).$$
From propositions  \ref{zerocontributiion} and \ref{nonzerocontributiion}, we infer that
$$S_{f,\chi}(N; r_{1}=0 \, \text{contri}) \ll  \, \left(\sqrt{N} p^{\ell/2}+\frac{ p^{13r/30} N^{7/12}}{p^{\ell /4}}\right) N^{\epsilon},$$
provided $\max\{ p^{\ell} , p^{r-\ell}\} \leq N$, and $\ell \leq 2r/3$. By equating two terms in parenthesis we get the value of $\ell$ which is given by
$$ \ell = \frac{26r}{45} + \frac{1}{9} \log_{p}{N}.$$ 
Note that this choice of $\ell$ satisfies above conditions $p^{\ell} \neq $ and $\ell \leq 2r/3$  provided that $N\geq p^{13r/20}$ and $N\leq p^{4r/5}$ respectively. Therefore we conclude that 
$$S_{f, \chi}(N) \ll  N^{\frac{5}{9}} \ p^{\frac{13r}{45}} \, N^{\epsilon},$$
provided $p^{13r/20} \leq N\leq p^{4r/5}$ and absolute value may depend on prime $p$. This concludes the proof of Theorem \ref{mainth}.
 
\section{Acknowledgements}
Authors are thankful to Ritabrata Munshi, Satadal Ganguly, D. Surya Ramana, Sumit  Kumar, Saurabh Singh, and Prahlad for their support and encouragements. Authors are thankful to Djordje Milicevic for his useful comments on the article. The first author thanks the ISI,  Kolkata for the nice research environment. And the second author thanks the department of mathematics IIT Bomaby for excellent research facilities.

{}


\begin{thebibliography}{}
	
	\bibitem{ghosh} A. Ghosh:  \emph{Weyl-type bounds for twisted GL(2) short character sums}, arxiv.
	
	\bibitem{mil} D. Mili\'cevi\'c: \emph{Sub-Weyl sub convexity for Dirichlet $L$-functions to prime power moduli}, Compositio Math.  {\bf 152}(2016), 825–875.
	
	\bibitem{miliblom}  V. Blomer and D. Mili\'cevi\'c: \emph{$p$-adic analytic twists and strong subconvexity},  Ann. Sci. \'Ec. Norm. Sup\'er.(4)(48), no {\bf 3}, 561–605(2015).
	
	\bibitem{RRZ} R. Holowinsky, R. Munshi, and Z. Qi: \emph{Beyond the Weyl barrier for GL(2) exponential sums}
	
	\bibitem{munshisingh} R. Munshi and S. Singh: \emph{ Weyl bound for $p$-power twist of $GL(2)$ $L$-functions}, Algebra and Number Theory, {\bf 13} (6). pp. 1395-1413 (2019).
	
	\bibitem{michle} E. Fouvry, E. Kowalski, P. Michel: \emph{Algebraic twists of modular forms and Hecke orbits},  Geom. Funct. Anal. {\bf 25}, no. 2, 580–657 (2015).
	
	\bibitem{DFI} H. Iwaniec and E. Kowalski:\emph{ Analytic number theory}, American Mathematical Society Colloquium Publications, {\bf Vol} 53 (American Mathematical Society, Providence, RI, 2004).
	
	\bibitem{Haung} B. Huang: \emph{On Rankin-Selberg problem}, Math. Annalen.{\bf 381}, 1217–1251 (2021). 
\end{thebibliography}
\end{document}